\def\@biblabel#1{#1.}
\newtheorem{thm}{Theorem}[section]
\newtheorem{lem}{Lemma}[section]
\theoremstyle{remark}
\newtheorem{defin}{Definition}[section]
\newtheorem{rem}{Remark}[section]
\newtheorem{ex}{Example}[section]
\begin{document}

\title{The Hahn Quantum Variational Calculus}

\author{A. B. Malinowska$^{1}$
\and
D. F. M. Torres$^{2}$}

\date{{\small (Submitted to JOTA: 3/March/2010; 4th revision: 9/June/2010; accepted: 18/June/2010)}}

{\renewcommand{\thefootnote}{}
\footnotetext{This work was partially supported by the \emph{Portuguese Foundation
for Science and Technology} (FCT) through the \emph{Center for Research
and Development in Mathematics and Applications} (CIDMA).
The first author is currently a researcher at the University of Aveiro with the support
of Bia{\l}ystok University of Technology, via a project of
the Polish Ministry of Science and Higher Education ``Wsparcie
miedzynarodowej mobilnosci naukowcow''.
The authors are grateful to Nat\'alia Martins
for reading a preliminary version of the
paper, and for many useful remarks.}}

\footnotetext[1]{Assistant Professor
in the Faculty of Computer Science,
Bia{\l}ystok University of Technology,
15-351 Bia\l ystok, Poland. E-mail: \texttt{abmalinowska@ua.pt}}

\footnotetext[2]{Corresponding author.
Associate Professor in the Department of Mathematics,
University of Aveiro, 3810-193 Aveiro, Portugal.
E-mail: \texttt{delfim@ua.pt}}

% ---------------------------------------------------

\maketitle

% ---------------------------------------------------

\begin{abstract}
We introduce the Hahn quantum variational calculus. Necessary and
sufficient optimality conditions for the basic, isoperimetric, and
Hahn quantum Lagrange problems, are studied. We also show the
validity of Leitmann's direct method for the Hahn quantum
variational calculus, and give explicit solutions to some concrete
problems. To illustrate the results, we provide several examples and
discuss a quantum version of the well known Ramsey model of
economics.

\bigskip

\noindent \textbf{Keywords:} Hahn's difference operator;
Jackson--Norl\"und's integral; quantum calculus; calculus of
variations; Leitmann's principle; Ramsey model.

\bigskip

\noindent \textbf{2010 Mathematics Subject Classification:} 39A13;
39A70; 49J05; 49K05.
\end{abstract}

% ---------------------------------------------------

\section{Introduction}

Quantum difference operators are receiving an increase of interest
due to their applications --- see, \textrm{e.g.},
\cite{Almeida,Bang04,Bangerezako,Cresson,Kac}.
Roughly speaking, a quantum calculus substitute the classical derivative
by a difference operator, which allows to deal with sets of nondifferentiable functions.
In \cite{Hahn}, Hahn introduced the quantum difference operator $D_{q,\omega}$,
where $q\in]0,1[$ and $\omega>0$ are fixed. The Hahn operator
unifies (in the limit) the two most well known and used
quantum difference operators: the Jackson $q$-difference derivative $D_{q}$,
where $q\in ]0,1[$ (\textrm{cf.} \cite{Gasper,Jackson1,Kac});
and the forward difference $\Delta_{\omega}$, where $\omega>0$
(\textrm{cf.} \cite{Bird,Jagerman,Jordan}).
The Hahn difference operator is a successful tool for constructing
families of orthogonal polynomials and investigating some
approximation problems --- see, \textrm{e.g.},
\cite{alvares,costas,odzi,Kwon,Petronilho}. However, only in 2009
the construction of a proper inverse of $D_{q,\omega}$ and
the associated integral calculus was given \cite{Aldwoah,Annaby}.

In this work we introduce the variational Hahn calculus. More
precisely, we formulate problems of the calculus of variations using
Hahn's difference operator and the Jackson--N\"orlund's integral. We
discuss the fundamental concepts of a variational calculus, such as
the Euler--Lagrange equations for the basic and isoperimetric
problems, as well as Lagrange and optimal control problems. As
particular cases we obtain the classical discrete-time calculus of
variations \cite[Chap.~8]{Kelley:Peterson}, the variational
$q$-calculus \cite{Bang04,Bangerezako},
and the calculus of variations applied
to N\"orlund's sum \cite{Fort1937}.

The plan of the paper is as follows. In Sect.~\ref{sec:Prel} some
basic formulas of Hahn's difference operator and the associated
Jackson--N\"orlund integral calculus are briefly reviewed. Our
results are formulated and proved in Sect.~\ref{sec:mainResults}.
Main results of the paper include necessary optimality conditions
for the basic problem of calculus of variations
(Theorem~\ref{thm:mr} in Sect.~\ref{ssec:EL}) and the
isoperimetric problem (Theorems~\ref{th:iso} and \ref{th:iso:abn} in
Sect.~\ref{ssec:ISO}), as well as a sufficient optimality condition
for the basic problem (Theorem~\ref{suff} in Sect.~\ref{ssuff}).
In Sect.~\ref{Leitmann} we show that the direct method introduced
by Leitmann in the sixties of the XX century \cite{Leitmann67}, can
also be applied with success to quantum variational problems via
Hahn's difference operator and Jackson--N\"orlund's integral.
Leitmann's method is a venerable forty years old direct method that
has shown through the times to be an universal and useful method in
several different contexts
--- see, \textrm{e.g.}, \cite{Car,CarlsonLeitmann05a,CarlsonLeitmann05b,%
TE,Leit,Leitmann01,MR1954118,MR2065731,MR2035262,mal:tor,SilvaTorres06,withLeitmann,Wagener}.
Sect.~\ref{exam} provides concrete examples of application of our
results. Finally, in Sect.~\ref{app} we apply the developed Hahn
variational calculus to obtain a quantum version of the well known
Ramsey model and we finish the paper with conclusions in Sect.~\ref{sec:conc}.

% ---------------------------------------------------

\section{Preliminaries}
\label{sec:Prel}

Let $q \in ]0,1[$ and $\omega \in ]0,+\infty[$ be given.
Define $\omega_0 := \frac{\omega}{1-q}$.
Throughout all the paper we assume $I$ to be an interval of
$\mathbb{R}$ containing $\omega_0$.

\begin{defin}[Hahn's difference operator]
Let $f : I \rightarrow \mathbb{R}$.
The Hahn difference operator is defined by
\begin{equation*}
D_{q,\omega} f(t):=
\begin{cases}
\frac{f(qt+\omega)-f(t)}{(qt+\omega)-t}, & \text{ if } t\neq \omega_0\\
f'(t), & \text{ if } t = \omega_0
\end{cases}
\end{equation*}
provided that $f$ is differentiable at $\omega_0$
(we are using $f'(t)$ to denote the Fr\'{e}chet derivative).
In this case, we call $D_{q,\omega}f$
the $q,\omega$-derivative of $f$ and say that $f$ is
$q,\omega$-differentiable on $I$.
\end{defin}

\begin{ex}
Let $q\in]0,1[$, $\omega=0$, and
\begin{equation*}
f(t)=
\begin{cases}
t^2, & \text{ if } t\in\mathbb{Q}\\
-t^2, & \text{ if } t\in\mathbb{R}\setminus\mathbb{Q}.
\end{cases}
\end{equation*}
In this case $\omega_0=0$, and $f$ is $q,\omega$-differentiable on the entire real line.
However, $f$ is Fr\'{e}chet differentiable in zero only.
\end{ex}

\begin{ex}
\label{ex:disctF:withD}
Let $q=\omega=1/2$. In this case $\omega_0=1$.
It is easy to see that $f : [-1,1] \rightarrow \mathbb{R}$ given by
\begin{equation*}
f(t) =
\begin{cases}
-t, & \text{ if } t \in (-1,0)\cup (0,1]\\
0, & \text{ if } t=-1\\
1, & \text{ if } t=0
\end{cases}
\end{equation*}
is not a continuous function but
is $q,\omega$-differentiable in $[-1,1]$ with
\begin{equation*}
D_{q,\omega} f(t) =
\begin{cases}
-1, & \text{ if } t \in (-1,0)\cup (0,1]\\
1, & \text{ if } t=-1\\
-3, & \text{ if }  t=0.
\end{cases}
\end{equation*}
\end{ex}

Note that,
\begin{equation*}
\lim_{q \uparrow 1}D_{q,\omega}f(t)=\Delta_{\omega} f(t), \quad
\lim_{\omega\downarrow 0}D_{q,\omega}f(t)=D_{q}f(t), \quad
\text{and } \lim_{\omega\downarrow 0,q\uparrow 1}D_{q,\omega}f(t)=f'(t),
\end{equation*}
where
\begin{equation*}
D_{q}f(t):=\frac{f(qt)-f(t)}{qt-t}, \quad t \neq 0,
\end{equation*}
is the Jackson $q$-difference derivative \cite{Gasper,Jackson1,Kac},
and
\begin{equation*}
\Delta_{\omega} f(t):=\frac{f(t+\omega)-f(t)}{(t+\omega)-t}
\end{equation*}
is the forward difference \cite{Bird,Jagerman,Jordan}.

\begin{rem}
Let $\eta_{q,\omega}(t) := qt + \omega$, $t \in I$.
Since $q \in ]0,1[$ and $\omega \in ]0,+\infty[$, then
$\eta_{q,\omega}(t)$ is a contraction,
$\eta_{q,\omega}(I)\subseteq I$,
$\eta_{q,\omega}(t) < t$ for $t
> \omega_0$, $\eta_{q,\omega}(t)> t$ for
$t < \omega_0$, and $\eta_{q,\omega}(\omega_0) = \omega_0$.
\end{rem}

The Hahn difference operator has the following properties:

\begin{thm}[\cite{Aldwoah,Annaby}]
\label{derv:prop}
\begin{itemize}
\item[(a)] Let $f$ be $q,\omega$-differentiable on $I$ and
$D_{q,\omega}f\equiv 0$ on $I$. Then $f$ is a constant. Conversely,
$D_{q,\omega}c=0$ for any constant $c$.
\item[(b)]Let $f$, $g$ be $q,\omega$-differentiable at $t\in I$. Then,
\begin{itemize}
\item[(i)]
$D_{q,\omega}(f+g)(t)=D_{q,\omega}f(t)+D_{q,\omega}g(t),$
\item[(ii)]
$D_{q,\omega}(fg)(t)=D_{q,\omega}(f(t))g(t)+
f(qt+\omega)D_{q,\omega}g(t),$
\item[(iii)]
$D_{q,\omega}\left(\frac{f}{g}\right)(t)
=\frac{D_{q,\omega}(f(t))g(t)-f(t)D_{q,\omega}g(t)}{g(t)g(qt+\omega)}$
provided $g(t)g(qt+\omega)\neq 0$.
\end{itemize}
\item[(c)]
$f(qt+\omega)=f(t)+((qt+\omega)-t)D_{q,\omega}f(t)$, $t\in I$.
\end{itemize}
\end{thm}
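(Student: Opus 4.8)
The plan is to dispatch (b) and (c) by direct computation from the definition of $D_{q,\omega}$, reserving the real work for the forward implication of (a). Throughout I abbreviate $\bar t := qt + \omega = \eta_{q,\omega}(t)$.

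For (c), fix $t \neq \omega_0$ and simply clear the denominator in the difference quotient:
\begin{equation*}
f(\bar t) - f(t) = (\bar t - t)\, D_{q,\omega}f(t),
\end{equation*}
which is the asserted identity. At $t = \omega_0$ the Remark gives $\bar t = \eta_{q,\omega}(\omega_0) = \omega_0$, so both sides vanish and the identity is trivially true. For (b) I would again fix $t \neq \omega_0$ and manipulate the difference quotients, the only device being ``add and subtract''. The sum rule (i) is immediate from the linearity of the quotient. For the product rule (ii), inserting $\pm f(\bar t)g(t)$ in the numerator of $D_{q,\omega}(fg)(t)$ and splitting the fraction produces $g(t)D_{q,\omega}f(t) + f(\bar t)D_{q,\omega}g(t)$; the asymmetric treatment of the two factors is precisely what makes $f(qt+\omega)$ rather than $f(t)$ appear in the statement. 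The quotient rule (iii) follows from the same manipulation applied to $f/g$, with the hypothesis $g(t)g(\bar t) \neq 0$ used to clear denominators. At the isolated point $t = \omega_0$ each of (i)--(iii) collapses to the corresponding classical rule for the Fr\'echet derivative.

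The substantial step is showing that $D_{q,\omega}f \equiv 0$ on $I$ forces $f$ to be constant. Here I would exploit the contraction property from the Remark. Vanishing of the $q,\omega$-derivative means $f(\bar t) = f(t)$ for every $t \neq \omega_0$, and this also holds at $t = \omega_0$ since there $\bar t = \omega_0$; hence $f(t) = f(\eta_{q,\omega}^{\,n}(t))$ for every $n$, where $\eta_{q,\omega}^{\,n}$ denotes the $n$-fold composition. Because $\eta_{q,\omega}$ is a contraction with fixed point $\omega_0$, one has $\eta_{q,\omega}^{\,n}(t) \to \omega_0$ for each $t \in I$ (indeed $|\eta_{q,\omega}^{\,n}(t) - \omega_0| = q^{n}\,|t - \omega_0|$). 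Since $f$ is differentiable, and therefore continuous, at $\omega_0$, letting $n \to \infty$ gives $f(t) = f(\omega_0)$ for all $t \in I$, so $f$ is constant. The converse in (a) is trivial, as a constant has zero difference quotient off $\omega_0$ and zero derivative at $\omega_0$.

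I expect the main obstacle to be exactly this limiting argument: one must recognise that the iterates of $\eta_{q,\omega}$ converge to $\omega_0$ (a consequence of $0 < q < 1$) and that continuity at the single point $\omega_0$ is enough to propagate the value $f(\omega_0)$ to the whole interval $I$. Everything else is routine bookkeeping with difference quotients.
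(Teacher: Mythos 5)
Your proof is correct, but note that the paper itself does not prove this statement: Theorem~\ref{derv:prop} is quoted as a known result from the cited references \cite{Aldwoah,Annaby}, so there is no in-paper proof to compare against. Evaluated on its own terms, your argument is complete. Parts (b) and (c) are indeed routine difference-quotient manipulations, and your observation that at $t=\omega_0$ everything collapses to the classical rules (because $\eta_{q,\omega}(\omega_0)=\omega_0$, so the point $qt+\omega$ coincides with $t$ there) is exactly the right way to handle the exceptional point. For part (a), your key step --- iterating $f(t)=f(\eta_{q,\omega}(t))$ and using $\eta_{q,\omega}^{\,n}(t)-\omega_0=q^n(t-\omega_0)\to 0$ together with continuity of $f$ at $\omega_0$ (which is guaranteed, since $q,\omega$-differentiability requires Fr\'echet differentiability at $\omega_0$) --- is the standard mechanism, and it is consistent with the machinery the paper itself invokes elsewhere: the iterates $\eta_{q,\omega}^{\,n}(a)=q^na+[n]_{q,\omega}$ and their convergence to $\omega_0$ are precisely what the authors use (again citing \cite{Aldwoah,Annaby}) in Case~II of the proof of Lemma~\ref{lemma:DR}. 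One small point worth making explicit: the iteration stays inside $I$ because $\eta_{q,\omega}(I)\subseteq I$, as recorded in the Remark you cite, so $f$ is indeed defined at every iterate.
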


\begin{ex}[\cite{Aldwoah,Annaby}]
Let $a,b\in\mathbb{R}$. We have
\begin{equation}\label{tpower}
D_{q,\omega}(at+b)^n=a\sum_{k=0}^{n-1}(a(qt+\omega)+b)^k(at+b)^{n-k-1},
\end{equation}
for $n\in\mathbb{N}$ and $t \neq \omega_0$.
\end{ex}

Following \cite{Aldwoah,Annaby,Hamza}, we define the inverse of the
operator $D_{q,\omega}$:

\begin{defin}
Let $I$ be a closed interval of $\mathbb{R}$ such that
$\omega_0, a, b\in I$. For $f:I\rightarrow
\mathbb{R}$ we define the $q,\omega$-integral of $f$ from $a$ to $b$
by
\begin{equation}
\label{int:1}
\int_{a}^{b}f(t)d_{q,\omega}t
:=\int_{\omega_0}^{b}f(t)d_{q,\omega}t-\int_{\omega_0}^{a}f(t)d_{q,\omega}t,
\end{equation}
where
\begin{equation}
\label{int:2}
\int_{\omega_0}^{x}f(t)d_{q,\omega}t
:=(x(1-q)-\omega)\sum_{k=0}^{\infty}q^kf(xq^k+[k]_{q,\omega}), \quad
x\in I,
\end{equation}
with $[k]_{q,\omega} :=\frac{\omega(1-q^k)}{1-q}$ for $k\in
\mathbb{N}_0 =\mathbb{N} \cup \{0\}$,
provided that the series converges at $x=a$ and $x=b$. In this case,
$f$ is called $q,\omega$-integrable on $[a,b]$. We say that $f$ is
$q,\omega$-integrable over $I$ iff it is $q,\omega$-integrable over
$[a,b]$, for all $a,b\in I$.
\end{defin}

Note that, in the integral formulas
\eqref{int:1} and \eqref{int:2}, when $\omega \downarrow 0$ we
obtain the Jackson $q$-integral
\begin{equation*}
\int_{a}^{b}f(t)d_{q}t=\int_{0}^{b}f(t)d_{q}t-\int_{0}^{a}f(t)d_{q}t,
\end{equation*}
where
\begin{equation*}
\int_{0}^{x}f(t)d_{q}t=x(1-q)\sum_{k=0}^{\infty}q^kf(xq^k)
\end{equation*}
(see, \textrm{e.g.}, \cite{Jackson2});
while if $q\uparrow 1$ we obtain  the N\"orlund sum
\begin{equation*}
\int_{a}^{b}f(t)\Delta_{\omega}t
=\int_{+\infty}^{b}f(t)\Delta_{\omega}t-\int_{+\infty}^{a}f(t)\Delta_{\omega}t,
\end{equation*}
where
\begin{equation*}
\int_{+\infty}^{x}f(t)\Delta_{\omega}t=-\omega\sum_{k=0}^{+\infty}f(x+k\omega)
\end{equation*}
(see, \textrm{e.g.}, \cite{Fort,Jagerman,nol}).
This is why the integral defined by \eqref{int:1} and \eqref{int:2}
is called the Jackson--N\"orlund integral.

If $f:I\rightarrow\mathbb{R}$ is continuous at $\omega_0$, then $f$
is $q,\omega$-integrable over $I$ (see \cite{Aldwoah,Annaby} for the proof).

\begin{thm}[\cite{Aldwoah,Annaby}]
Assume $f:I\rightarrow \mathbb{R}$ be continuous at $\omega_0$.
Define
\begin{equation*}
F(x):=\int_{\omega_0}^{x}f(t)d_{q,\omega}t.
\end{equation*}
Then $F$ is continuous at $\omega_0$. Furthermore,
$D_{q,\omega}F(x)$ exists for every $x\in I$ and
$D_{q,\omega}F(x)=f(x)$. Conversely,
\begin{equation*}
\int_{a}^{b}D_{q,\omega}f(t)d_{q,\omega}t= f(b)-f(a)
\end{equation*}
for all $a,b\in I$.
\end{thm}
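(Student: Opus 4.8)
The plan is to exploit the orbit structure of the contraction $\eta := \eta_{q,\omega}$, $\eta(t)=qt+\omega$, from the Remark. Since $\eta$ is affine with fixed point $\omega_0$, one has $\eta^k(x)-\omega_0 = q^k(x-\omega_0)$, so that $\eta^k(x)=xq^k+[k]_{q,\omega}$ and the summand in \eqref{int:2} is precisely $f$ evaluated along the $\eta$-orbit of $x$. Two elementary identities drive the whole argument: $x(1-q)-\omega=(1-q)(x-\omega_0)$ and $\eta(x)(1-q)-\omega=q\,(x(1-q)-\omega)$. With these I would record the compact form $F(x)=(1-q)(x-\omega_0)\sum_{k=0}^{\infty}q^k f(\eta^k(x))$, which makes all subsequent manipulations transparent.

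First I would prove continuity of $F$ at $\omega_0$. As $F(\omega_0)=0$, it suffices to show $F(x)\to 0$. Continuity of $f$ at $\omega_0$ yields $\delta>0$ and a bound $|f|\le M$ on $\{\,|t-\omega_0|<\delta\,\}$; because $|\eta^k(x)-\omega_0|=q^k|x-\omega_0|$, every orbit point lies in this neighbourhood once $|x-\omega_0|<\delta$, whence $|F(x)|\le M|x-\omega_0|$ after summing the geometric series, and this tends to $0$. Next, for $x\neq\omega_0$ I would compute the difference quotient directly. Reindexing the series for $F(\eta(x))$ by one step and using the prefactor identity $\eta(x)(1-q)-\omega=q\,(x(1-q)-\omega)$, the series for $F(\eta(x))$ and $F(x)$ differ only in the $k=0$ term, giving the collapse $F(\eta(x))-F(x)=-(x(1-q)-\omega)f(x)$; dividing by $\eta(x)-x=-(x(1-q)-\omega)$ yields $D_{q,\omega}F(x)=f(x)$.

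The delicate case is $x=\omega_0$, where by definition $D_{q,\omega}F(\omega_0)$ is the Fr\'echet derivative $F'(\omega_0)$. Here the difference quotient equals $(1-q)\sum_{k=0}^{\infty}q^k f(\eta^k(x))$, and I must pass the limit $x\to\omega_0$ inside the infinite sum. This interchange is the main obstacle: it is justified by the same uniform control $|\eta^k(x)-\omega_0|=q^k|x-\omega_0|$, which lets continuity of $f$ at $\omega_0$ bound $|f(\eta^k(x))-f(\omega_0)|$ simultaneously for all $k$; the limit is then $(1-q)f(\omega_0)\sum_{k}q^k=f(\omega_0)$, establishing both the existence of $F'(\omega_0)$ and its value $f(\omega_0)$.

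For the converse I would first treat $a=\omega_0$ and $x\neq\omega_0$, writing $t_k:=\eta^k(x)$. Property (c) of Theorem~\ref{derv:prop} gives $(\,t_{k+1}-t_k)D_{q,\omega}f(t_k)=f(t_{k+1})-f(t_k)$, and since $t_{k+1}-t_k=-(1-q)q^k(x-\omega_0)$, each term $q^k D_{q,\omega}f(t_k)$ is a fixed multiple of $f(t_k)-f(t_{k+1})$. The series therefore telescopes to $f(x)-\lim_k f(t_{k+1})=f(x)-f(\omega_0)$, the last equality using that $q,\omega$-differentiability of $f$ forces continuity of $f$ at $\omega_0$ together with $t_{k+1}\to\omega_0$. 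Multiplying by the prefactor $(1-q)(x-\omega_0)$ gives $\int_{\omega_0}^{x}D_{q,\omega}f(t)\,d_{q,\omega}t=f(x)-f(\omega_0)$, while $x=\omega_0$ is trivial. Finally, splitting $\int_a^b=\int_{\omega_0}^b-\int_{\omega_0}^a$ through \eqref{int:1} cancels the $f(\omega_0)$ terms and delivers $f(b)-f(a)$.
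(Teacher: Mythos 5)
Your proof is correct, but there is nothing in the paper to compare it against: the statement carries the citation \cite{Aldwoah,Annaby} and is imported as a known result, with no internal proof given (the paper likewise defers to those references for the related integrability claim). So your write-up supplies an argument the paper deliberately omits. Checking it on its merits: the three identities $\eta^k(x)=xq^k+[k]_{q,\omega}$, $x(1-q)-\omega=(1-q)(x-\omega_0)$, and $\eta(x)(1-q)-\omega=q\,(x(1-q)-\omega)$ are all correct; the one-term collapse $F(\eta(x))-F(x)=-(x(1-q)-\omega)f(x)$ settles the case $x\neq\omega_0$ since $\eta(x)-x=-(x(1-q)-\omega)\neq 0$ there; the uniform orbit bound $|\eta^k(x)-\omega_0|=q^k|x-\omega_0|\le|x-\omega_0|$ legitimately justifies both the continuity of $F$ at $\omega_0$ and the interchange of limit and sum needed to get $F'(\omega_0)=f(\omega_0)$ (via $(1-q)\sum_k q^k=1$); and the converse follows from the telescoping identity furnished by item (c) of Theorem~\ref{derv:prop}, where your observation that $q,\omega$-differentiability at $\omega_0$ means ordinary differentiability there, hence continuity, is exactly what makes $\lim_k f(t_{k+1})=f(\omega_0)$ legitimate. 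The only step you leave implicit is that the series defining $F(x)$ converges for \emph{every} $x\in I$, not merely for $x$ near $\omega_0$; this is needed for the claim that $D_{q,\omega}F(x)$ exists on all of $I$ and for the manipulation of $F(\eta(x))$. It follows at once from your own estimate (finitely many terms before the orbit enters the $\delta$-neighbourhood of $\omega_0$, then a geometric tail bounded by $M\sum_{k\ge K}q^k$), and it is also available as the fact quoted in the paper just before the theorem, that continuity at $\omega_0$ implies $q,\omega$-integrability over $I$; it deserves one sentence in your proof but is not a gap in substance.
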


The $q,\omega$-integral has the following properties.

\begin{thm}[\cite{Aldwoah,Annaby}]
\label{integr:prop}
\begin{itemize}
\item[(a)] Let $f,g:I\rightarrow \mathbb{R}$ be $q,\omega$-integrable on
$I$, $a$, $b$, $c\in I$ and $k\in \mathbb{R}$. Then,
\begin{itemize}
\item[(i)]
$\int_{a}^{a}f(t)d_{q,\omega}t=0$,
\item[(ii)]
$\int_{a}^{b}kf(t)d_{q,\omega}t=k\int_{a}^{b}f(t)d_{q,\omega}t,$
\item[(iii)]
$\int_{a}^{b}f(t)d_{q,\omega}t=-\int_{b}^{a}f(t)d_{q,\omega}t,$
\item[(iv)]
$\int_{a}^{b}f(t)d_{q,\omega}t
=\int_{a}^{c}f(t)d_{q,\omega}t+\int_{c}^{b}f(t)d_{q,\omega}t,$
\item[(v)]
$\int_{a}^{b}(f(t)+g(t))d_{q,\omega}t
=\int_{a}^{b}f(t)d_{q,\omega}t+\int_{a}^{b}g(t)d_{q,\omega}t$.
\end{itemize}
\item[(b)] Every Riemann integrable function $f$ on $I$ is
$q,\omega$-integrable on $I$.
\item[(c)] If $f,g:I\rightarrow \mathbb{R}$ are continuous at $\omega_0$,
then
\begin{equation*}
\left.\int_{a}^{b}f(t)D_{q,\omega}g(t)d_{q,\omega}t
=f(t)g(t)\right|_{t=a}^{t=b}-\int_{a}^{b}D_{q,\omega}(f(t))g(qt+\omega)d_{q,\omega}t,
\quad a,b\in I.
\end{equation*}
\end{itemize}
\end{thm}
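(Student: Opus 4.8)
The plan is to treat the three parts separately: parts (a) and (b) follow directly from the series definition \eqref{int:2} together with the reduction \eqref{int:1}, while the integration-by-parts identity (c) is obtained by combining the product rule of Theorem~\ref{derv:prop}(b)(ii) with the Fundamental Theorem stated just above.

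For part (a) I would argue from \eqref{int:1}, which expresses every integral over $[a,b]$ as the difference of two integrals based at $\omega_0$. Then (i) is $\int_{\omega_0}^a - \int_{\omega_0}^a = 0$; (iii) is the sign change under swapping the two terms; and (iv) is the telescoping identity
\[
\left(\int_{\omega_0}^c - \int_{\omega_0}^a\right) + \left(\int_{\omega_0}^b - \int_{\omega_0}^c\right) = \int_{\omega_0}^b - \int_{\omega_0}^a .
\]
Properties (ii) and (v) are read off \eqref{int:2} by pulling a constant, respectively a finite sum, through the series term by term, which is legitimate once the series converges (part (b)).

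For part (b) the only real content is a convergence estimate. The sample points appearing in \eqref{int:2}, namely $x_k := x q^k + [k]_{q,\omega}$, are exactly the iterates $\eta_{q,\omega}^k(x)$, so by the contraction property of $\eta_{q,\omega}$ recorded in the Remark they satisfy $x_k \to \omega_0$. Thus all but finitely many $x_k$ lie in a fixed compact neighborhood of $\omega_0$, on which a Riemann integrable $f$ is bounded, say by $M$. Each term of the series is then dominated by $|x(1-q)-\omega|\,M\,q^k$, and comparison with the geometric series $\sum_k q^k$ gives absolute convergence of $\int_{\omega_0}^x f\,d_{q,\omega}t$ for every $x\in I$; hence $f$ is $q,\omega$-integrable on every $[a,b]\subseteq I$.

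The heart of the statement is part (c), which I would obtain exactly as in the classical case. Writing the product rule of Theorem~\ref{derv:prop}(b)(ii) with the roles of $f$ and $g$ interchanged gives
\[
D_{q,\omega}(fg)(t) = g(qt+\omega)\,D_{q,\omega}f(t) + f(t)\,D_{q,\omega}g(t),
\]
so that $f(t)\,D_{q,\omega}g(t) = D_{q,\omega}(fg)(t) - g(qt+\omega)\,D_{q,\omega}f(t)$. Integrating both sides over $[a,b]$ and using the linearity from part (a), the term $\int_a^b D_{q,\omega}(fg)(t)\,d_{q,\omega}t$ collapses to $f(t)g(t)\big|_{t=a}^{t=b}$ by the Fundamental Theorem, producing precisely the asserted identity. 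The one point needing care, and the reason for the continuity hypothesis, is that this application of the Fundamental Theorem requires $fg$ to be continuous at $\omega_0$; this is guaranteed because $f$ and $g$ are each assumed continuous at $\omega_0$. Everything else is formal manipulation of convergent series, so I expect no further obstacle.
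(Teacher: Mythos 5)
Your proposal cannot be checked against an internal argument, because the paper contains none: Theorem~\ref{integr:prop} is quoted as known background from \cite{Aldwoah,Annaby}, and the authors only \emph{use} it (most importantly part (c), in the proof of Theorem~\ref{thm:mr}). So your proof supplies what the paper delegates to its references. Judged on its own merits it is sound, and it follows the natural route: part (a) by elementary manipulation of the definitions \eqref{int:1}--\eqref{int:2}; part (b) by dominating the series by a geometric one, using that the sample points $xq^k+[k]_{q,\omega}=\eta_{q,\omega}^k(x)$ all lie in the compact interval with endpoints $x$ and $\omega_0$ (since $\eta_{q,\omega}^k(x)-\omega_0=q^k(x-\omega_0)$), on which a Riemann integrable function is bounded; part (c) by the symmetrized product rule of Theorem~\ref{derv:prop}(b)(ii) together with the fundamental theorem applied to $fg$, whose continuity at $\omega_0$ is exactly what the hypothesis delivers.

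Two points deserve tightening. First, in part (a) the convergence needed for (ii) and (v) is part of the hypothesis that $f$ and $g$ are $q,\omega$-integrable; your appeal to part (b) there is out of place (and reads as circular, since (b) concerns Riemann integrable functions, which are not assumed in (a)). Second, in part (c), before splitting $\int_a^b\left[D_{q,\omega}(fg)(t)-g(qt+\omega)D_{q,\omega}f(t)\right]d_{q,\omega}t$ by linearity you must know that the two integrals exist separately: linearity, as stated in (a), applies to integrable functions. The first integral exists by the fundamental theorem. For the second (equivalently, for the left-hand side $\int_a^b f(t)D_{q,\omega}g(t)\,d_{q,\omega}t$), observe that if $f$ is Fr\'{e}chet differentiable at $\omega_0$ --- which is implicitly required for $D_{q,\omega}f(\omega_0)$ to be defined at all --- then $D_{q,\omega}f$ is continuous at $\omega_0$, because writing $t-\omega_0=u$ gives $D_{q,\omega}f(t)=\frac{f(\omega_0+qu)-f(\omega_0+u)}{(q-1)u}\to f'(\omega_0)$ as $u\to 0$; hence $g(q\cdot+\omega)D_{q,\omega}f(\cdot)$ is continuous at $\omega_0$ and therefore $q,\omega$-integrable by the fact quoted in the paper just before the fundamental theorem. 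With that observation inserted, your formal manipulation is fully justified.
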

Property (c) in Theorem~\ref{integr:prop} is the integration by
parts formula for the Jackson--N\"orlund integral, and will be
useful in the proof of our Theorem~\ref{thm:mr}.

\begin{lem}[\cite{Aldwoah,Annaby}]
\label{integr:ineq} Let $s\in I$, $f$ and $g$ be
$q,\omega$-integrable. If $|f(t)|\leq g(t)$ for all
$t\in\{q^ns+[n]_{q,\omega}: n\in\mathbb{N}_{0}\}$, $s\in I$, then
\begin{equation*}
\left|\int_{\omega_0}^{b}f(t)d_{q,\omega}t\right|\leq\int_{\omega_0}^{b}g(t)d_{q,\omega}t,
\quad
\left|\int_{a}^{b}f(t)d_{q,\omega}t\right|\leq\int_{a}^{b}g(t)d_{q,\omega}t
\end{equation*}
for all $a,b\in\{q^ns+[n]_{q,\omega}: n\in\mathbb{N}_{0}\}$.
\end{lem}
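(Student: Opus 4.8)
The plan is to reduce both inequalities to a termwise comparison of the defining series \eqref{int:2}, using that every sample point produced by the integral stays on the grid on which $|f|\le g$ is assumed. Set $t_n := q^n s + [n]_{q,\omega}$; since $[n]_{q,\omega}=\frac{\omega(1-q^n)}{1-q}=\omega_0(1-q^n)$, one has $t_n=\omega_0+q^n(s-\omega_0)$, and the hypothesis reads simply $|f(t_n)|\le g(t_n)$ for every $n\in\mathbb{N}_0$. The key preliminary computation is that, for $b=t_m$, the points $bq^k+[k]_{q,\omega}$ occurring in \eqref{int:2} collapse to $t_{m+k}$ and hence remain in the grid; together with $b(1-q)-\omega=(1-q)(t_m-\omega_0)=(1-q)q^m(s-\omega_0)$ this gives, after absorbing $q^m$ into the series and reindexing $n=m+k$,
\begin{equation*}
\int_{\omega_0}^{t_m}f(t)\,d_{q,\omega}t=(1-q)(s-\omega_0)\sum_{n=m}^{\infty}q^n f(t_n),
\end{equation*}
and the analogous identity with $g$ in place of $f$.

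For the first inequality I would argue in the regime $s\ge\omega_0$, where each $t_n\ge\omega_0$ and the prefactor $(1-q)(s-\omega_0)$ is nonnegative; this is exactly what makes the right-hand side $\int_{\omega_0}^{b}g\,d_{q,\omega}t$ nonnegative, so that the stated bound can hold. Taking absolute values, pulling out the nonnegative constant, applying the triangle inequality to the series, and then using $|f(t_n)|\le g(t_n)$ termwise yields
\begin{equation*}
\left|\int_{\omega_0}^{t_m}f(t)\,d_{q,\omega}t\right|
\le(1-q)(s-\omega_0)\sum_{n=m}^{\infty}q^n|f(t_n)|
\le(1-q)(s-\omega_0)\sum_{n=m}^{\infty}q^n g(t_n)
=\int_{\omega_0}^{t_m}g(t)\,d_{q,\omega}t.
\end{equation*}

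For the second inequality I would use the definition \eqref{int:1}, writing $\int_a^b=\int_{\omega_0}^b-\int_{\omega_0}^a$ with $a=t_j$ and $b=t_m$. Substituting the series identity above, the two infinite tails $\sum_{n\ge m}$ and $\sum_{n\ge j}$ cancel down to the finite block $(1-q)(s-\omega_0)\sum_{n=m}^{j-1}q^n f(t_n)$, valid for $m\le j$ (equivalently $a\le b$ when $s>\omega_0$). Applying the triangle inequality to this finite sum and then the termwise bound $|f(t_n)|\le g(t_n)$ gives $\bigl|\int_a^b f\,d_{q,\omega}t\bigr|\le\int_a^b g\,d_{q,\omega}t$ by the same reasoning as above.

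The main obstacle is the analytic and sign bookkeeping rather than the algebra. The triangle inequality for the infinite series is legitimate only because the series converges absolutely: $q,\omega$-integrability of $g$ makes $\sum_n q^n g(t_n)$ convergent, and $0\le q^n|f(t_n)|\le q^n g(t_n)$ then forces $\sum_n q^n f(t_n)$ to converge absolutely by comparison, so the finite-partial-sum triangle inequalities pass to the limit. Equally, the asserted bounds are correctly oriented only when $(1-q)(s-\omega_0)\ge0$ and the endpoints are ordered so that the tail cancellation leaves a nonnegative block; this pins down the natural regime $\omega_0\le a\le b$ (grid on or above $\omega_0$), in which both right-hand sides are nonnegative. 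Outside it the inequality must be read with the limits of integration oriented accordingly.
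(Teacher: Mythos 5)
The paper offers no proof of this lemma: it is quoted directly from the cited sources \cite{Aldwoah,Annaby}, so there is no in-paper argument to compare yours against; what you have written is, in effect, the missing proof. Within the regime you isolate it is correct. The grid identity $t_mq^k+[k]_{q,\omega}=t_{m+k}$ and the prefactor computation $t_m(1-q)-\omega=q^m(1-q)(s-\omega_0)$ are both right, giving $\int_{\omega_0}^{t_m}f(t)\,d_{q,\omega}t=(1-q)(s-\omega_0)\sum_{n\ge m}q^nf(t_n)$; the convergence bookkeeping is handled properly (the $g$-series has nonnegative terms, hence converges absolutely, and comparison then gives absolute convergence of the $f$-series, legitimizing the termwise triangle inequality); and the reduction of $\int_a^b$ to a finite block via cancellation of tails is exactly the right mechanism.

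Two remarks on the caveat you raise, which is the only delicate point. First, the sign restriction is not a weakness of your argument but a genuine defect of the statement as transcribed: if $s<\omega_0$ then $\int_{\omega_0}^{b}g(t)\,d_{q,\omega}t\le 0$ while $\bigl|\int_{\omega_0}^{b}f(t)\,d_{q,\omega}t\bigr|\ge 0$, so the first inequality fails already for $f=g>0$ on the grid; an ordering hypothesis is tacit in the sources, and the paper itself signals this fragility immediately after the lemma by warning that \eqref{surp:ineq} is not always true. Second, you restrict both inequalities to the regime $\omega_0\le a\le b$, but your own finite-block computation shows the second inequality needs only $a\le b$, with no condition on the position of the grid: when $s<\omega_0$ the prefactor $(1-q)(s-\omega_0)$ is negative, but the orientation of the block flips as well (for $s<\omega_0$, $a=t_j\le b=t_m$ corresponds to $j\le m$), so the two signs cancel and $\int_a^bf(t)\,d_{q,\omega}t=(1-q)(\omega_0-s)\sum_{n=j}^{m-1}q^nf(t_n)$, which compares termwise against $g$ exactly as before. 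Thus only the first inequality truly requires the grid to lie on or above $\omega_0$; stating this sharper split would make your writeup optimal.
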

However, it should be noted that the inequality
\begin{equation}
\label{surp:ineq}
\left|\int_{a}^{b}f(t)d_{q,\omega}t\right|\leq\int_{a}^{b}|f(t)|d_{q,\omega}t,
\end{equation}
$a,b\in I$, is not always true. For an example we refer the
reader to \cite{Aldwoah,Annaby}.

% ---------------------------------------------------

\section{Main Results}
\label{sec:mainResults}

Let $a,b\in I$ with $a < b$. We define the $q,\omega$-interval by
$$[a,b]_{q,\omega}:=\{q^na+[n]_{q,\omega}:
n\in\mathbb{N}_{0}\}\cup\{q^nb+[n]_{q,\omega}:
n\in\mathbb{N}_{0}\}\cup\{\omega_0\}.$$ By $\mathcal{D}$ we denote
the set of all real valued functions defined on $[a,b]_{q,\omega}$
and continuous at $\omega_0$.

\begin{lem}[Fundamental Lemma of the Hahn quantum variational calculus]
\label{lemma:DR} Let $f\in \mathcal{D}$. One has
$\int_{a}^{b}f(t)h(qt+\omega)d_{q,\omega}t=0$ for all functions
$h\in \mathcal{D}$ with $h(a)=h(b)=0$ if and only if $f(t)=0$ for
all $t\in[a,b]_{q,\omega}$.
\end{lem}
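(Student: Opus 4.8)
The plan is to prove the two implications separately. The direction $(\Leftarrow)$ is immediate: if $f(t) = 0$ for all $t \in [a,b]_{q,\omega}$, then the integrand $f(t)h(qt+\omega)$ vanishes at every point of the grid $\{q^n a + [n]_{q,\omega}\} \cup \{q^n b + [n]_{q,\omega}\}$ on which the Jackson--N\"orlund integral is evaluated (via the series \eqref{int:2} together with \eqref{int:1}), so the integral is $0$ regardless of $h$. The substantive direction is $(\Rightarrow)$, which I would prove by contraposition: assuming $f(c) \neq 0$ for some $c \in [a,b]_{q,\omega}$, I would construct a single admissible test function $h \in \mathcal{D}$ with $h(a) = h(b) = 0$ for which the integral is nonzero.

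The construction is the crux. Writing out $\int_a^b f(t)h(qt+\omega)\,d_{q,\omega}t$ using \eqref{int:1} and \eqref{int:2} expresses the integral as a difference of two convergent series, $(b(1-q)-\omega)\sum_k q^k f(\sigma_k^b)h(q\sigma_k^b+\omega) - (a(1-q)-\omega)\sum_k q^k f(\sigma_k^a)h(q\sigma_k^a+\omega)$, where $\sigma_k^x := xq^k + [k]_{q,\omega}$. The natural choice is to localize $h$ at the single offending point: since $c$ lies in $[a,b]_{q,\omega}$, the argument $qc+\omega$ is again a grid point, and I would define $h$ to equal $f(c)$ (or any fixed nonzero value chosen so the resulting term carries the sign of $f(c)$) at the appropriate preimage point and $0$ at every other point of $[a,b]_{q,\omega}$, in particular at $a$ and $b$. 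Because the grid points accumulate only at $\omega_0$, and $h$ is nonzero at just one point distinct from $\omega_0$, such an $h$ is automatically continuous at $\omega_0$ and hence lies in $\mathcal{D}$.

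The main obstacle I anticipate is bookkeeping around the point $\omega_0$ and the possibility that the two defining series share grid points or that $c = \omega_0$ itself. Near $\omega_0$ the grid accumulates, so I must verify that setting $h$ to a nonzero constant at one point does not inadvertently force a value at $\omega_0$ that breaks continuity; choosing the nonzero point to be different from $\omega_0$ handles this, and the degenerate case $c=\omega_0$ can be treated separately (there one uses continuity of $f$ at $\omega_0$ to find a nearby grid point where $f$ is nonzero, or argues directly). I would also need to check that with this spike-type $h$ exactly one term in the combined series survives and that its coefficient $q^k(x(1-q)-\omega)$ is nonzero (which holds since $x \neq \omega_0$ gives $x(1-q) \neq \omega$, and $q^k > 0$), so the integral reduces to a single nonzero summand of the form $(\text{nonzero coefficient}) \cdot f(c) \cdot h(qc+\omega)$, contradicting the hypothesis that the integral vanishes for all admissible $h$.
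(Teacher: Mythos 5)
Your proposal is correct, and its core construction is the same as the paper's: for the nontrivial direction you build the one-point ``spike'' test function placed at the successor $qc+\omega$ of the offending grid point $c$, exactly as in the paper's Case~I (spike at $q^{k+1}a+[k+1]_{q,\omega}$ when $c=q^{k}a+[k]_{q,\omega}$), and you justify the nonvanishing of the surviving coefficient $q^{k}\left(x(1-q)-\omega\right)$ the same way the paper does, via $x\neq\omega_0$. Where you genuinely diverge is the case $c=\omega_0$: the paper's Case~II constructs explicit test functions supported on infinite tails of \emph{both} grids, with the values $f(q^{n}a+[n]_{q,\omega})$ and $f(q^{n}b+[n]_{q,\omega})$ swapped between the grids so the integral becomes a sum of positive terms, whereas you reduce to the main case by continuity of $f$ at $\omega_0$: since $f\in\mathcal{D}$ and the grid points accumulate at $\omega_0$, $f$ is nonzero at some grid point different from $\omega_0$, and the spike argument applies there. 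Your reduction is shorter and in fact more robust: the paper's Case~II functions, as literally written (with $h(\omega_0)=0$ from the ``otherwise'' clause), tend to $f(\omega_0)\neq 0$ along the grids and so are not continuous at $\omega_0$; this is repairable by redefining $h(\omega_0):=f(\omega_0)$ when $\omega_0\neq a,b$, but in the subcase $\omega_0=b$ admissibility forces $h(\omega_0)=0$, so there one essentially needs your continuity reduction anyway. One caveat you share with the paper: both arguments tacitly assume that the spike point $qc+\omega$ differs from $a$ and $b$ and, more generally, that the $a$-grid and $b$-grid do not intersect; if, say, $b=q^{m}a+[m]_{q,\omega}$, then $b(1-q)-\omega=q^{m}\left(a(1-q)-\omega\right)$ and the contributions of the spike to the two series cancel, so the ``exactly one term survives'' step you flag as needing verification is precisely where such degenerate configurations must be excluded --- but this is a defect of the paper's own proof (indeed of the statement itself in that configuration), not something your argument handles worse.
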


\begin{proof}
The implication ``$\Leftarrow$'' is obvious. Let us prove the
implication ``$\Rightarrow$''. Suppose, by contradiction, that
$f(p)\neq 0$ for some $p\in[a,b]_{q,\omega}$. \\
\emph{Case I}. If $p\neq \omega_0$, then $p=q^ka+[k]_{q,\omega}$ or
$p=q^kb+[k]_{q,\omega}$ for some $k\in\mathbb{N}_{0}$. Observe that
$a(1-q)-\omega$ and $b(1-q)-\omega$ cannot vanish simultaneously.
Therefore, without loss of generality, we can assume
$a(1-q)-\omega\neq0$ and $p=q^ka+[k]_{q,\omega}$. Define
$$
h(t)=
\begin{cases}
f(q^ka+[k]_{q,\omega}),  & \text{ if } t=q^{k+1}a+[k+1]_{q,\omega}\\
0, & \mbox{ otherwise}\, .
\end{cases}
$$
Then,
$$
\int_{a}^{b}f(t)h(qt+\omega)d_{q,\omega}t= -(a(1-q)-\omega)
q^kf(q^ka+[k]_{q,\omega})h(q^{k+1}a+[k+1]_{q,\omega})\neq 0,
$$
which is a contradiction.\\
\emph{Case II}. If $p=\omega_0$, then without loss of generality we
can assume $f(\omega_0)>0$. We know that (see \cite{Aldwoah,Annaby}
for more details)
$$
\lim_{n\uparrow\infty}q^na+[n]_{q,\omega}
=\lim_{n\uparrow\infty}q^nb+[n]_{q,\omega}=\omega_0.
$$
As $f$ is continuous at $\omega_0$, we have
$$
\lim_{n\uparrow\infty}f(q^na+[n]_{q,\omega})
=\lim_{n\uparrow\infty}f(q^nb+[n]_{q,\omega})=f(\omega_0).
$$
Therefore, there exists $N\in\mathbb{N}$, such that for all $n>N$ the
inequalities
$$
f(q^na+[n]_{q,\omega})>0 \ \ \text{ and } \ \  f(q^nb+[n]_{q,\omega})>0
$$
hold. If $\omega_0\neq a,b$, then we define
$$
h(t)=
\begin{cases}
f(q^nb+[n]_{q,\omega}),  & \text{ if } t=q^{n+1}a+[n+1]_{q,\omega},
\quad \mbox{for all}\quad  n>N\\
f(q^na+[n]_{q,\omega}),  & \text{ if } t=q^{n+1}b+[n+1]_{q,\omega},
\quad \mbox{for all}\quad  n>N\\
0, & \mbox{ otherwise}\, .
\end{cases}
$$
Hence,
$$
\int_{a}^{b}f(t)h(qt+\omega)d_{q,\omega}t=
(b-a)(1-q)\sum_{n=N}^{\infty}
q^nf(q^na+[n]_{q,\omega})f(q^{n}b+[n]_{q,\omega})\neq 0,
$$
which is a contradiction. If $\omega_0= b$, then we define
$$
h(t)=
\begin{cases}
f(\omega_0),  & \text{ if } t=q^{n+1}a+[n+1]_{q,\omega},
\quad \mbox{for all}\quad  n>N\\
0, & \mbox{ otherwise}\, .
\end{cases}
$$
Hence,
$$
\int_{a}^{b}f(t)h(qt+\omega)d_{q,\omega}t
=-\int_{\omega_0}^{a}f(t)h(qt+\omega)d_{q,\omega}t
=-(a(1-q)-\omega)\sum_{n=N}^{\infty}
q^nf(q^na+[n]_{q,\omega})f(\omega_0)\neq 0,
$$
which is a contradiction. Similarly,
we show the case when $\omega_0=a$.
\end{proof}

Let $\mathbb{E}$ be the linear space of functions $y\in \mathcal{D}$
for which the $q,\omega$-derivative is bounded on $[a,b]_{q,\omega}$
and continuous at $\omega_0$. We equip $\mathbb{E}$ with the norm
\begin{equation*}
\|y\|_{1}=\sup_{t\in [a,b]_{q,\omega}}|y(t)| +\sup_{t\in
[a,b]_{q,\omega}}|D_{q,\omega}y(t)|.
\end{equation*}
For $s\in I$ we set
$$
[s]_{q,\omega}=\{q^ns+[n]_{q,\omega}:
n\in\mathbb{N}_{0}\}\cup\{\omega_0\}.
$$

In the sequel we need one more result. The following definition and
lemma are similar to \cite{B:04}.

\begin{defin}
Let
$g:[s]_{q,\omega}\times]-\bar{\theta},\bar{\theta}[\rightarrow\mathbb{R}$.
We say that $g(t,\cdot)$ is continuous in $\theta_{0}$, uniformly in
$t$, iff for every $\varepsilon>0$ there exists $\delta>0$ such that
$|\theta-\theta_{0}|<\delta$ implies
$|g(t,\theta)-g(t,\theta_{0})|<\varepsilon$ for all
$t\in[s]_{q,\omega}$. Furthermore, we say that $g(t,\cdot)$ is
differentiable at $\theta_{0}$, uniformly in $t$, iff for every
$\varepsilon>0$ there exists $\delta>0$ such that
$0<|\theta-\theta_{0}|<\delta$ implies
$$
\left|\frac{g(t,\theta)-g(t,\theta_{0})}{\theta
-\theta_{0}}-\partial_2g(t,\theta_{0})\right|<\varepsilon,
$$
where $\partial_2g=\frac{\partial g}{\partial \theta}$ for all $t\in[s]_{q,\omega}$.
\end{defin}

\begin{lem}
\label{fun}
Assume $g(t,\cdot)$ be differentiable at $\theta_0$, uniformly in $t$
in $[s]_{q,\omega}$, and that
$G(\theta):=\int_{\omega_0}^sg(t,\theta)d_{q,\omega}t$, for $\theta$
near $\theta_0$, and
$\int_{\omega_0}^s\partial_2g(t,\theta_0)d_{q,\omega}$ exist. Then,
$G(\theta)$ is differentiable at $\theta_0$ with
$G'(\theta_0)=\int_{\omega_0}^s\partial_2g(t,\theta_0)d_{q,\omega}t$.
\end{lem}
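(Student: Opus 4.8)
The plan is to work directly from the series representation of the Jackson--N\"orlund integral in \eqref{int:2} and to reduce the statement to a termwise estimate controlled by the uniform differentiability hypothesis. Writing $t_k := sq^k + [k]_{q,\omega}$, so that $t_k \in [s]_{q,\omega}$ for every $k \in \mathbb{N}_0$, I would first record the two series expansions
\begin{equation*}
G(\theta) = (s(1-q)-\omega)\sum_{k=0}^\infty q^k g(t_k,\theta), \qquad \int_{\omega_0}^s \partial_2 g(t,\theta_0)\, d_{q,\omega}t = (s(1-q)-\omega)\sum_{k=0}^\infty q^k \partial_2 g(t_k,\theta_0),
\end{equation*}
both of which converge by hypothesis (the former for $\theta$ near $\theta_0$, the latter by the assumed existence of the integral). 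Since the integral is, for fixed endpoints, a fixed scalar times a convergent series, linearity lets me form the difference quotient termwise:
\begin{equation*}
\frac{G(\theta)-G(\theta_0)}{\theta-\theta_0} = (s(1-q)-\omega)\sum_{k=0}^\infty q^k \,\frac{g(t_k,\theta)-g(t_k,\theta_0)}{\theta-\theta_0}.
\end{equation*}

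Subtracting the two displayed series, I would then aim to estimate
\begin{equation*}
\left|\frac{G(\theta)-G(\theta_0)}{\theta-\theta_0} - \int_{\omega_0}^s \partial_2 g(t,\theta_0)\, d_{q,\omega}t\right| = |s(1-q)-\omega|\left|\sum_{k=0}^\infty q^k \left(\frac{g(t_k,\theta)-g(t_k,\theta_0)}{\theta-\theta_0} - \partial_2 g(t_k,\theta_0)\right)\right|.
\end{equation*}
Given $\varepsilon > 0$, uniform differentiability of $g(t,\cdot)$ at $\theta_0$ supplies a single $\delta > 0$ such that $0 < |\theta-\theta_0| < \delta$ forces each bracketed quantity to have absolute value below $\varepsilon$, simultaneously for all $t_k \in [s]_{q,\omega}$. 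Bounding the series and summing the geometric factors via $\sum_{k=0}^\infty q^k = (1-q)^{-1}$, I obtain the uniform bound $|s(1-q)-\omega|\,\varepsilon/(1-q)$, which tends to $0$ with $\varepsilon$. Letting $\theta \to \theta_0$ then yields both the existence of $G'(\theta_0)$ and its claimed value.

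The one point requiring care is the passage from the termwise estimate to a bound on the whole series, that is, the interchange of the limit $\theta \to \theta_0$ with the infinite summation. I expect this to be the main, though modest, obstacle: the series of absolute differences is not known to converge in advance, so I would argue on finite partial sums $\sum_{k=0}^{M}$, apply the uniform estimate and the geometric bound uniformly in $M$, and only afterwards let $M \to \infty$. The degenerate case $s = \omega_0$, where $s(1-q)-\omega = 0$ so that $G \equiv 0$ and the integral vanishes, is trivial and can be dispatched in one line. This argument parallels the analogous differentiation-under-the-integral result of \cite{B:04}, with the Riemann integral there replaced by the Jackson--N\"orlund sum here.
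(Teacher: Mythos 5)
Your proof is correct, and it rests on the same core idea as the paper's: use the uniform differentiability hypothesis to obtain a single $\delta$ controlling the integrand's difference-quotient error for all $t$ simultaneously, and then integrate that uniform bound. The implementation, however, differs. The paper never unfolds the series: it bounds the pointwise error by the constant $\varepsilon/(s-\omega_0)$ and then invokes linearity (Theorem~\ref{integr:prop}) together with Lemma~\ref{integr:ineq} --- the lattice-restricted inequality $\left|\int_{\omega_0}^{s}f(t)\,d_{q,\omega}t\right|\leq\int_{\omega_0}^{s}g(t)\,d_{q,\omega}t$ valid when $|f|\leq g$ on $[s]_{q,\omega}$ --- to get $\left|\frac{G(\theta)-G(\theta_0)}{\theta-\theta_0}-\int_{\omega_0}^{s}\partial_2 g(t,\theta_0)\,d_{q,\omega}t\right|<\int_{\omega_0}^{s}\frac{\varepsilon}{s-\omega_0}\,d_{q,\omega}t=\varepsilon$ in one stroke. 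Your partial-sum computation on the defining series \eqref{int:2} essentially re-proves, by hand, exactly the instance of Lemma~\ref{integr:ineq} that the paper cites; this costs a few extra lines but buys genuine advantages. First, it is self-contained and makes transparent why no appeal to the generally false inequality \eqref{surp:ineq} is needed: only the lattice points $t_k=q^k s+[k]_{q,\omega}$ ever enter. Second, your explicit treatment of the degenerate case $s=\omega_0$ (where $s(1-q)-\omega=0$ and everything vanishes) fixes a small gap in the paper's argument, which divides by $s-\omega_0$ and thus tacitly assumes $s\neq\omega_0$; moreover, your use of the factor $|s(1-q)-\omega|$ handles the sign correctly when $s<\omega_0$, whereas the paper's majorant $\varepsilon/(s-\omega_0)$ is only a legitimate (nonnegative) majorant when $s>\omega_0$. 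Your caution about interchanging the limit $\theta\to\theta_0$ with the summation is resolved correctly by the uniform-in-$M$ bound on partial sums; in fact, once $0<|\theta-\theta_0|<\delta$, each term of the error series is bounded by $\varepsilon q^k$, so the series even converges absolutely and the partial-sum detour, while safe, is hardly an obstacle.
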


\begin{proof}
Let $\varepsilon>0$ be arbitrary. Since $g(t,\cdot)$ is
differentiable at $\theta_{0}$, uniformly in $t$, there exists
$\delta>0$, such that, for all $t\in[s]_{q,\omega}$, and for
$0<|\theta-\theta_{0}|<\delta$, the following inequality holds:
$$
\left|\frac{g(t,\theta)-g(t,\theta_{0})}{\theta-\theta_{0}}
-\partial_2g(t,\theta_{0})\right|<\frac{\varepsilon}{s-\omega_0}.
$$
Applying Theorem~\ref{integr:prop} and Lemma~\ref{integr:ineq}, for
$0<|\theta-\theta_{0}|<\delta$, we have
\begin{multline*}
\left|\frac{G(\theta)-G(\theta_{0})}{\theta-\theta_{0}}-G'(\theta_{0})\right|
=\left|\frac{\int_{\omega_0}^sg(t,\theta)d_{q,\omega}t-\int_{\omega_0}^s
g(t,\theta_{0})d_{q,\omega}t}{\theta-\theta_{0}}
-\int_{\omega_0}^s\partial_2g(t,\theta_{0})d_{q,\omega}t\right|\\
=\left|\int_{\omega_0}^s\left[\frac{g(t,\theta)-g(t,\theta_{0})}{\theta-\theta_{0}}
-\partial_2g(t,\theta_{0})\right]d_{q,\omega}t\right|
<\int_{\omega_0}^s\frac{\varepsilon}{s-\omega_0}d_{q,\omega}t
=\frac{\varepsilon}{s-\omega_0}\int_{\omega_0}^s1d_{q,\omega}t
=\varepsilon.
\end{multline*}
Hence, $G(\cdot)$ is differentiable at $\theta_0$ and
$G'(\theta_0)=\int_{\omega_0}^s\partial_2g(t,\theta_0)d_{q,\omega}t$.
\end{proof}

%----------------------------------------------------

\subsection{The Hahn Quantum Euler--Lagrange Equation}
\label{ssec:EL}

We consider the variational problem of finding minima (or maxima) of
a functional
\begin{equation}
\label{vp} \mathcal{L}[y]
=\int_{a}^{b}f(t,y(qt+\omega),D_{q,\omega}y(t))d_{q,\omega} t
\end{equation}
over all $y\in\mathbb{E}$ satisfying the boundary conditions
\begin{equation}
\label{bc} y(a)=\alpha,\, \quad y(b)=\beta, \quad \alpha,\beta\in
\mathbb{R},
\end{equation}
where $f:[a,b]_{q,\omega}\times\mathbb{R}\times\mathbb{R}\rightarrow
\mathbb{R}$ is a given function. A function $y\in \mathbb{E}$ is
said to be admissible iff it satisfies the endpoint conditions
\eqref{bc}. Let us denote by $\partial_2f$ and $\partial_3f$,
respectively, the partial derivatives of $f(\cdot,\cdot,\cdot)$ with
respect to its second and third argument. In the sequel, we assume that
$(u,v)\rightarrow f(t,u,v)$ be a $C^1(\mathbb{R}^{2}, \mathbb{R})$
function for any $t \in [a,b]_{q,\omega}$, and
$f(\cdot,y(\cdot),D_{q,\omega}y(\cdot))$,
$\partial_2f(\cdot,y(\cdot),D_{q,\omega}y(\cdot))$, and
$\partial_3f(\cdot,y(\cdot),D_{q,\omega}y(\cdot))$ are continuous at
$\omega_0$ for any admissible function $y(\cdot)$. Finally, an
$h\in\mathbb{E}$ is called an admissible variation provided
$h(a)=h(b)=0$.

For an admissible variation $h$, we define function
$\phi : \, ]-\bar{\varepsilon},\bar{\varepsilon}[\rightarrow \mathbb{R}$
by
$$
\phi(\varepsilon) = \phi(\varepsilon;y,h)
:=\mathcal{L}[y + \varepsilon h].
$$
The first variation of problem
\eqref{vp}--\eqref{bc} is defined by
$$\delta\mathcal{L}[y,h]:=\phi(0;y,h)=\phi'(0).$$
Observe that,
\begin{equation*}
\begin{split}
\mathcal{L}[y + \varepsilon h]
&= \int_a^b f(t,y(qt+\omega)+\varepsilon h(qt+\omega),D_{q,\omega}y(t)
+ \varepsilon D_{q,\omega}h(t)) d_{q,\omega} t\\
&=\int_{\omega_0}^b f(t,y(qt+\omega)+\varepsilon
h(qt+\omega),D_{q,\omega}y(t)+ \varepsilon D_{q,\omega}h(t))
d_{q,\omega} t\\
&\qquad -\int_{\omega_0}^a f(t,y(qt+\omega)+\varepsilon
h(qt+\omega),D_{q,\omega}y(t)+ \varepsilon D_{q,\omega}h(t))
d_{q,\omega} t.
\end{split}
\end{equation*}
Writing
\begin{equation*}
\mathcal{L}_b[y + \varepsilon h]=\int_{\omega_0}^b
f(t,y(qt+\omega)+\varepsilon h(qt+\omega),D_{q,\omega}y(t)+
\varepsilon D_{q,\omega}h(t)) d_{q,\omega} t
\end{equation*}
and
\begin{equation*}
\mathcal{L}_a[y + \varepsilon h]=\int_{\omega_0}^a
f(t,y(qt+\omega)+\varepsilon h(qt+\omega),D_{q,\omega}y(t)+
\varepsilon D_{q,\omega}h(t)) d_{q,\omega} t,
\end{equation*}
we have $$\mathcal{L}[y + \varepsilon h]=\mathcal{L}_b[y +
\varepsilon h]-\mathcal{L}_a[y + \varepsilon h].$$ Therefore,
\begin{equation}\label{var}
\delta\mathcal{L}[y,h]=\delta\mathcal{L}_b[y,h]-\delta\mathcal{L}_a[y,
h].
\end{equation}
Knowing \eqref{var}, the following lemma is a direct consequence of
Lemma~\ref{fun}.

\begin{lem}
\label{asump}
Put $g(t,\varepsilon)=f(t,y(qt+\omega)+\varepsilon
h(qt+\omega),D_{q,\omega}y(t)+ \varepsilon D_{q,\omega}h(t))$ for
$\varepsilon \in ]-\bar{\varepsilon},\bar{\varepsilon}[$. Assume
that:
\begin{itemize}
\item[(i)] $g(t,\cdot)$ be differentiable at $0$, uniformly in
$t \in [a]_{q,\omega}$, and $g(t,\cdot)$ be differentiable at $0$,
uniformly in $t \in [b]_{q,\omega}$;
\item[(ii)] $\mathcal{L}_a[y + \varepsilon h]$ and $\mathcal{L}_b[y + \varepsilon
h]$, for $\varepsilon$ near $0$, exist;
\item[(iii)] $\int_{\omega_0}^a\partial_2g(t,0)d_{q,\omega}t$ and
$\int_{\omega_0}^b\partial_2g(t,0)d_{q,\omega}t$ exist.
\end{itemize}
Then,
$$
\delta\mathcal{L}[y,h]=\int_a^b \left[ \partial_2
f(t,y(qt+\omega),D_{q,\omega}y(t)) h(qt+\omega)
+ \partial_3 f(t,y(qt+\omega),D_{q,\omega}y(t))
D_{q,\omega}h(t)\right]d_{q,\omega}t.
$$
\end{lem}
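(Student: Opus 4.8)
The plan is to apply Lemma~\ref{fun} separately to the two pieces $\mathcal{L}_b[y+\varepsilon h]$ and $\mathcal{L}_a[y+\varepsilon h]$, treating the variation parameter $\varepsilon$ as the variable $\theta$ in Lemma~\ref{fun} (with $\theta_0=0$) and $g(t,\varepsilon)$ as the integrand. First I would observe that $\mathcal{L}_b[y+\varepsilon h]=\int_{\omega_0}^b g(t,\varepsilon)\,d_{q,\omega}t$ and $\mathcal{L}_a[y+\varepsilon h]=\int_{\omega_0}^a g(t,\varepsilon)\,d_{q,\omega}t$ have exactly the form $G(\theta)=\int_{\omega_0}^s g(t,\theta)\,d_{q,\omega}t$ of Lemma~\ref{fun}, with $s=b$ and $s=a$ respectively. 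The three hypotheses (i)--(iii) of the present lemma are precisely what Lemma~\ref{fun} requires in each case: differentiability of $g(t,\cdot)$ at $0$ uniformly in $t$ on $[b]_{q,\omega}$ (resp.\ on $[a]_{q,\omega}$), existence of the integral functional near $0$, and existence of $\int_{\omega_0}^b \partial_2 g(t,0)\,d_{q,\omega}t$ (resp.\ with $a$). Hence Lemma~\ref{fun} yields
\[
\delta\mathcal{L}_b[y,h]=\int_{\omega_0}^b \partial_2 g(t,0)\,d_{q,\omega}t,
\qquad
\delta\mathcal{L}_a[y,h]=\int_{\omega_0}^a \partial_2 g(t,0)\,d_{q,\omega}t,
\]
where here $\partial_2 g$ denotes the derivative of $g$ with respect to its second argument $\varepsilon$.

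Next I would compute this derivative explicitly. Since $(u,v)\mapsto f(t,u,v)$ is $C^1(\mathbb{R}^2,\mathbb{R})$, the chain rule gives, for the inner arguments $u=y(qt+\omega)+\varepsilon h(qt+\omega)$ and $v=D_{q,\omega}y(t)+\varepsilon D_{q,\omega}h(t)$,
\[
\partial_2 g(t,\varepsilon)=\partial_2 f(t,u,v)\,h(qt+\omega)+\partial_3 f(t,u,v)\,D_{q,\omega}h(t).
\]
Evaluating at $\varepsilon=0$ produces the integrand $w(t):=\partial_2 f(t,y(qt+\omega),D_{q,\omega}y(t))\,h(qt+\omega)+\partial_3 f(t,y(qt+\omega),D_{q,\omega}y(t))\,D_{q,\omega}h(t)$. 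Finally I would combine the two pieces via \eqref{var} and the definition \eqref{int:1} of the $q,\omega$-integral:
\[
\delta\mathcal{L}[y,h]=\delta\mathcal{L}_b[y,h]-\delta\mathcal{L}_a[y,h]
=\int_{\omega_0}^b w(t)\,d_{q,\omega}t-\int_{\omega_0}^a w(t)\,d_{q,\omega}t
=\int_a^b w(t)\,d_{q,\omega}t,
\]
which is the claimed formula.

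As for difficulties, the argument is essentially mechanical once Lemma~\ref{fun} is in hand, so there is no deep obstacle. The one point demanding care is the identification of the abstract derivative $\partial_2 g(t,0)$ supplied by hypothesis (i) with the concrete chain-rule expression $w(t)$: this is justified by the $C^1$ assumption on $f$, which guarantees that the pointwise $\varepsilon$-derivative exists and equals $w(t)$, while hypothesis (i) merely upgrades this to the uniform-in-$t$ differentiability that Lemma~\ref{fun} demands. One should also keep the two meanings of the subscript ``$2$'' straight, since $\partial_2 g$ is the derivative in the parameter $\varepsilon$ whereas $\partial_2 f$ is the derivative in the second slot of $f$.
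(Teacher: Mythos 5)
Your proof is correct and follows exactly the route the paper intends: the paper dispenses with a written proof, stating only that the lemma ``is a direct consequence of Lemma~\ref{fun}'' given the decomposition \eqref{var}, and your argument---applying Lemma~\ref{fun} to $\mathcal{L}_a$ and $\mathcal{L}_b$ separately, identifying $\partial_2 g(t,0)$ via the chain rule and the $C^1$ hypothesis on $f$, then recombining with \eqref{int:1}---is precisely the omitted verification.
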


In the sequel, we always assume, without mentioning it explicitly,
that variational problems satisfy the assumptions of Lemma~\ref{asump}.

\begin{defin}
An admissible function $\tilde{y}$ is said to be a local minimizer
(resp. a local maximizer) to problem \eqref{vp}--\eqref{bc} iff there
exists $\delta >0$, such that $\mathcal{L}[\tilde{y}] \leq
\mathcal{L}[y]$ (resp. $\mathcal{L}[\tilde{y}] \geq \mathcal{L}[y]$)
for all admissible $y$ with $\|y-\tilde{y}\|_{1}<\delta$.
\end{defin}

The following result offers a necessary condition for local
extremizer.

\begin{thm}[A necessary optimality condition for problem \eqref{vp}--\eqref{bc}]
\label{nec:con}
Suppose that the optimal path to problem \eqref{vp}--\eqref{bc} exists
and is given by $\tilde{y}$. Then, $\delta\mathcal{L}[\tilde{y},h]=0$.
\end{thm}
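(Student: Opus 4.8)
The plan is to reduce the statement to the elementary fact from single-variable calculus that a differentiable function attaining a local extremum at an interior point of its domain must have vanishing derivative there (Fermat's theorem). The object to which this is applied is precisely the function $\phi(\varepsilon) = \phi(\varepsilon;\tilde{y},h) = \mathcal{L}[\tilde{y} + \varepsilon h]$ introduced just before Lemma~\ref{asump}, whose derivative at $0$ is by definition $\delta\mathcal{L}[\tilde{y},h]$.

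First I would fix an arbitrary admissible variation $h$, so that $h(a) = h(b) = 0$. The key observation is that this condition guarantees $\tilde{y} + \varepsilon h$ is itself admissible for every $\varepsilon$, since $(\tilde{y} + \varepsilon h)(a) = \tilde{y}(a) = \alpha$ and $(\tilde{y} + \varepsilon h)(b) = \tilde{y}(b) = \beta$, so that the boundary conditions \eqref{bc} are preserved. Next I would translate the optimality of $\tilde{y}$ into a property of $\phi$: assuming $\tilde{y}$ is a local minimizer (the maximizer case is analogous, or follows upon replacing $\mathcal{L}$ by $-\mathcal{L}$), there is $\delta > 0$ with $\mathcal{L}[\tilde{y}] \leq \mathcal{L}[y]$ whenever $y$ is admissible and $\|y - \tilde{y}\|_{1} < \delta$. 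Because $\|(\tilde{y} + \varepsilon h) - \tilde{y}\|_{1} = |\varepsilon|\,\|h\|_{1}$, for $|\varepsilon|$ sufficiently small the perturbed path lies inside this $\delta$-ball (the degenerate case $\|h\|_{1} = 0$ forces $h \equiv 0$, and the conclusion is then trivial). Hence $\phi(\varepsilon) \geq \phi(0)$ for all $\varepsilon$ near $0$, which is to say that $\varepsilon = 0$ is an interior local minimum of $\phi$.

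Finally, I would invoke Lemma~\ref{asump}, which under the standing assumptions provides an explicit formula for $\delta\mathcal{L}[\tilde{y},h] = \phi'(0)$ and, in particular, certifies that $\phi$ is differentiable at $0$. Fermat's theorem then yields $\phi'(0) = 0$, i.e.\ $\delta\mathcal{L}[\tilde{y},h] = 0$. Since $h$ was an arbitrary admissible variation, the conclusion holds as stated.

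I do not expect any serious obstacle, as the entire content of the argument is the reduction to a one-variable extremum problem. The only two points demanding a moment of care are the preservation of admissibility under the perturbation --- which is exactly the reason for imposing $h(a) = h(b) = 0$ --- and the existence of $\phi'(0)$, which is not automatic for a $q,\omega$-functional built from the Jackson--N\"orlund integral but is precisely what Lemma~\ref{asump} secures.
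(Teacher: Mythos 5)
Your proposal is correct and follows essentially the same route as the paper: both define $\phi(\varepsilon)=\mathcal{L}[\tilde{y}+\varepsilon h]$, use the local minimality of $\tilde{y}$ together with $\|(\tilde{y}+\varepsilon h)-\tilde{y}\|_{1}=|\varepsilon|\,\|h\|_{1}$ to show $\phi$ has a local minimum at $\varepsilon=0$, and conclude $\phi'(0)=\delta\mathcal{L}[\tilde{y},h]=0$ by the one-variable extremum argument. If anything, your write-up is slightly more careful than the paper's, since you explicitly check admissibility of the perturbed path, treat the degenerate case $\|h\|_{1}=0$, and point to Lemma~\ref{asump} as the source of differentiability of $\phi$ at $0$.
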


\begin{proof}
Without loss of generality, we can assume $\tilde{y}$ to be a local
minimizer. Let $h$ be any admissible variation and define a function
$\phi : \, ]-\bar{\varepsilon},\bar{\varepsilon}[\rightarrow \mathbb{R}$
by $\phi(\varepsilon) = \mathcal{L}[\tilde{y} + \varepsilon h]$.
Since $\tilde{y}$ is a local minimizer, there exists $\delta >0$, such
that $\mathcal{L}[\tilde{y}] \leq \mathcal{L}[y]$ for all admissible
$y$ with $\|y-\tilde{y}\|_{1}<\delta$. Therefore, $\phi(\varepsilon)
= \mathcal{L}[\tilde{y} + \varepsilon
h]\geq\mathcal{L}[\tilde{y}]=\phi(0)$ for all
$\varepsilon<\frac{\delta}{\|h\|_{1}}$. Hence, $\phi$ has a local
minimum at $\varepsilon=0$, and thus our assertion follows.
\end{proof}

\begin{thm}[The Hahn quantum Euler--Lagrange equation for problem \eqref{vp}--\eqref{bc}]
\label{thm:mr} Suppose that the optimal path to problem
\eqref{vp}--\eqref{bc} exists and is given by $\tilde{y}$. Then,
\begin{equation}
\label{Euler}
D_{q,\omega}\partial_3f(t,\tilde{y}(qt+\omega),D_{q,\omega}\tilde{y}(t))
=\partial_2f(t,\tilde{y}(qt+\omega),D_{q,\omega}\tilde{y}(t))
\end{equation}
for all $t \in [a,b]_{q,\omega}$.
\end{thm}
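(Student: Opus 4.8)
The plan is to turn the abstract stationarity condition into a pointwise equation by combining the first-variation formula with an integration by parts and the fundamental lemma. First, since $\tilde y$ is assumed to be an optimal path, Theorem~\ref{nec:con} gives $\delta\mathcal{L}[\tilde y,h]=0$ for every admissible variation $h$. By Lemma~\ref{asump} this first variation has the explicit form
\[
\delta\mathcal{L}[\tilde y,h]=\int_a^b\Bigl[\partial_2 f\bigl(t,\tilde y(qt+\omega),D_{q,\omega}\tilde y(t)\bigr)\,h(qt+\omega)
+\partial_3 f\bigl(t,\tilde y(qt+\omega),D_{q,\omega}\tilde y(t)\bigr)\,D_{q,\omega}h(t)\Bigr]d_{q,\omega}t,
\]
so the task reduces to removing the $q,\omega$-derivative that acts on the test function $h$ in the second summand.

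Second, I would apply the integration-by-parts formula of Theorem~\ref{integr:prop}(c) to that second summand, with the first factor being $t\mapsto\partial_3 f(t,\tilde y(qt+\omega),D_{q,\omega}\tilde y(t))$ and the second factor being $h$. This is admissible because $h\in\mathbb{E}\subseteq\mathcal{D}$ is continuous at $\omega_0$ and, by the standing assumptions on the Lagrangian, so is $\partial_3 f(\cdot,\tilde y(\cdot),D_{q,\omega}\tilde y(\cdot))$. The formula produces a boundary term $\bigl[\partial_3 f\cdot h\bigr]_{t=a}^{t=b}$, which vanishes thanks to $h(a)=h(b)=0$, and replaces the integral by $-\int_a^b D_{q,\omega}\bigl(\partial_3 f(t,\tilde y(qt+\omega),D_{q,\omega}\tilde y(t))\bigr)\,h(qt+\omega)\,d_{q,\omega}t$; note that the surviving test function appears precisely in the shifted form $h(qt+\omega)$, matching the hypothesis of the fundamental lemma.

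Third, substituting this back and using linearity of the $q,\omega$-integral (Theorem~\ref{integr:prop}), the condition $\delta\mathcal{L}[\tilde y,h]=0$ becomes
\[
\int_a^b\Bigl[\partial_2 f\bigl(t,\tilde y(qt+\omega),D_{q,\omega}\tilde y(t)\bigr)-D_{q,\omega}\partial_3 f\bigl(t,\tilde y(qt+\omega),D_{q,\omega}\tilde y(t)\bigr)\Bigr]h(qt+\omega)\,d_{q,\omega}t=0
\]
for all admissible $h$. Finally I would invoke the Fundamental Lemma of the Hahn quantum variational calculus (Lemma~\ref{lemma:DR}) to conclude that the bracketed coefficient vanishes identically on $[a,b]_{q,\omega}$, which is exactly the Euler--Lagrange equation~\eqref{Euler}.

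The main obstacle is the regularity bookkeeping needed so that the two black-box results genuinely apply: the integration-by-parts formula requires both factors to be continuous at $\omega_0$, and the fundamental lemma requires its integrand to lie in $\mathcal{D}$, i.e.\ the bracket $\partial_2 f-D_{q,\omega}\partial_3 f$ must be continuous at $\omega_0$. While the continuity of $\partial_2 f$ and of $\partial_3 f$ along $\tilde y$ is part of the standing hypotheses, verifying that $D_{q,\omega}\partial_3 f(\cdot,\tilde y(\cdot),D_{q,\omega}\tilde y(\cdot))$ inherits continuity at $\omega_0$ — so that the coefficient qualifies as an admissible $f$ in Lemma~\ref{lemma:DR} — is the only delicate point; everything else is a direct assembly of the quoted lemmas.
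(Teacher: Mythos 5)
Your proposal is correct and follows essentially the same route as the paper's own proof: stationarity via Theorem~\ref{nec:con} and Lemma~\ref{asump}, integration by parts from Theorem~\ref{integr:prop}(c) with the boundary term killed by $h(a)=h(b)=0$, and then Lemma~\ref{lemma:DR} applied to the shifted factor $h(qt+\omega)$. The regularity caveat you raise (continuity of $D_{q,\omega}\partial_3 f$ at $\omega_0$ so that the bracket qualifies for Lemma~\ref{lemma:DR}) is a fair observation that the paper itself passes over silently, relying on its standing assumptions.
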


\begin{proof}
Suppose that $\mathcal{L}$ has a local extremum at $\tilde{y}$. Let
$h$ be any admissible variation and define a function
$\phi : \, ]-\bar{\varepsilon},\bar{\varepsilon}[\rightarrow \mathbb{R}$
by $\phi(\varepsilon) = \mathcal{L}[\tilde{y} + \varepsilon h]$. By
Theorem~\ref{nec:con}, a necessary condition for $\tilde{y}$ to be
an extremizer is given by
\begin{equation}
\label{eq:FT} \left.\phi'(\varepsilon)\right|_{\varepsilon=0} = 0
\Leftrightarrow \int_a^b \left[ \partial_2f(\cdots) h(qt+\omega) +
\partial_3f(\cdots) D_{q,\omega}h(t)\right]d_{q,\omega}t = 0 \, ,
\end{equation}
where $(\cdots) =
\left(t,\tilde{y}(qt+\omega),D_{q,\omega}\tilde{y}(t)\right)$.
Integration by parts (see item (c) in Theorem~\ref{integr:prop})
gives
\begin{equation*}
\int_a^b \partial_3f(\cdots) D_{q,\omega}h(t)d_{q,\omega}t
=\left.\partial_3f(\cdots)h(t)\right|_{t=a}^{t=b}-\int_a^b
 D_{q,\omega}\partial_3f(\cdots)h(qt+\omega)d_{q,\omega}t.
\end{equation*}
Because $h(a) = h(b)= 0$, the necessary condition \eqref{eq:FT} can
be written as
\begin{equation*}
0 = \int_a^b \left(\partial_2f(\cdots)
-D_{q,\omega}\partial_3f(\cdots)\right) h(qt+\omega)d_{q,\omega}t
\end{equation*}
for all $h$ such that $h(a) = h(b) = 0$. Thus, by
Lemma~\ref{lemma:DR}, we have
\begin{equation*}
\partial_2f(\cdots)-D_{q,\omega}\partial_3f(\cdots)=0
\end{equation*}
for all $t\in[a,b]_{q,\omega}$.
\end{proof}

\begin{rem}
If the function $f$ under the sign of integration (the Lagrangian)
is given by $f = f(t,y_1,\ldots, y_n,
D_{q,\omega}y_1,\ldots,D_{q,\omega}y_n)$, then the necessary
optimality condition is given by $n$ equations similar to
\eqref{Euler}, one equation for each variable.
\end{rem}

% ---------------------------------------------------

\subsection{The Hahn Quantum Isoperimetric Problem}
\label{ssec:ISO}

Let us consider now the isoperimetric problem,
which consists of minimizing or maximizing
\begin{equation}
\label{ivp} \mathcal{L}[y]
=\int_{a}^{b}f(t,y(qt+\omega),D_{q,\omega}y(t))d_{q,\omega} t
\end{equation}
over all $y\in \mathbb{E}$ satisfying the boundary conditions
\begin{equation}
\label{bivp}
 y(a)=\alpha,\, \quad y(b)=\beta,
\end{equation}
and the constraint
\begin{equation}
\label{civp} \mathcal{K}[y]
=\int_{a}^{b}g(t,y(qt+\omega),D_{q,\omega}y(t))d_{q,\omega}t =k,
\end{equation}
where $\alpha$, $\beta$, and $k$ are given real numbers. We assume
that:
\begin{itemize}
\item[(i)] $(u,v)\rightarrow f(t,u,v)$ and $(u,v)\rightarrow g(t,u,v)$
be $C^1(\mathbb{R}^{2}, \mathbb{R})$ functions for any $t \in
[a,b]_{q,\omega}$;
\item[(ii)] functions
$f(\cdot,y(\cdot)D_{q,\omega}y(\cdot))$,
$\partial_2f(\cdot,y(\cdot),D_{q,\omega}y(\cdot))$ and
$\partial_3f(\cdot,y(\cdot),D_{q,\omega}y(\cdot))$,
$g(\cdot,y(\cdot),D_{q,\omega}y(\cdot))$,
$\partial_2g(\cdot,y(\cdot),D_{q,\omega}y(\cdot))$ and
$\partial_3g(\cdot,y(\cdot),D_{q,\omega}y(\cdot))$, be continuous
at $\omega_0$ for any admissible function $y(\cdot)$.
\end{itemize}

\begin{defin}
An admissible function $\tilde{y}$ is said to be a local minimizer
(resp. local maximizer) for the isoperimetric problem
\eqref{ivp}--\eqref{civp} iff there exists $\delta >0$ such that
$\mathcal{L}[\tilde{y}]\leq \mathcal{L}[y]$ (resp.
$\mathcal{L}[\tilde{y}] \geq \mathcal{L}[y]$) for all admissible $y$
satisfying the boundary conditions \eqref{bivp}, the isoperimetric
constraint \eqref{civp}, and $\|y-\tilde{y}\|_{1}<\delta$.
\end{defin}

\begin{defin}
We say that $\tilde{y}$ is an extremal for $\mathcal{K}$, iff
\eqref{Euler} holds with respect to \eqref{civp}:
\begin{equation*}
D_{q,\omega}\partial_3
g(t,\tilde{y}(qt+\omega),D_{q,\omega}\tilde{y}(t))
=\partial_2g(t,\tilde{y}(qt+\omega),D_{q,\omega}\tilde{y}(t))
\end{equation*}
for all $t \in [a,b]_{q,\omega}$. An extremizer (\textrm{i.e.}, a
local minimizer or a local maximizer) for the problem
\eqref{ivp}--\eqref{civp}, that is not an extremal for $\mathcal{K}$,
is said to be a normal extremizer; otherwise (\textrm{i.e.}, if it
is an extremal for $\mathcal{K}$), the extremizer is said to be
abnormal.
\end{defin}

\begin{thm}
\label{th:iso} If $\tilde{y}$ is a normal extremizer for the
isoperimetric problem \eqref{ivp}--\eqref{civp}, then there exists a
real $\lambda$, such that
\begin{equation}
\label{iso}
D_{q,\omega}\partial_3F(t,\tilde{y}(qt+\omega),D_{q,\omega}\tilde{y}(t))
=\partial_2F(t,\tilde{y}(qt+\omega),D_{q,\omega}\tilde{y}(t))
\end{equation}
for all $t \in [a,b]_{q,\omega}$, where $F=f-\lambda g$.
\end{thm}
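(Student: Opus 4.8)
The plan is to adapt the classical Lagrange-multiplier technique for isoperimetric problems to the Hahn setting, using a two-parameter family of admissible variations together with the Implicit Function Theorem. First I would fix two admissible variations $h_1,h_2\in\mathbb{E}$ (so $h_i(a)=h_i(b)=0$) and consider $\tilde{y}+\varepsilon_1 h_1+\varepsilon_2 h_2$, which still meets the boundary conditions \eqref{bivp} for every $(\varepsilon_1,\varepsilon_2)$ near $(0,0)$. Setting
$$
\hat{\mathcal{L}}(\varepsilon_1,\varepsilon_2):=\mathcal{L}[\tilde{y}+\varepsilon_1 h_1+\varepsilon_2 h_2],\qquad
\hat{\mathcal{K}}(\varepsilon_1,\varepsilon_2):=\mathcal{K}[\tilde{y}+\varepsilon_1 h_1+\varepsilon_2 h_2]
$$
reduces the constrained problem to a finite-dimensional one: $\tilde{y}$ being a normal extremizer forces $(0,0)$ to be a local extremizer of $\hat{\mathcal{L}}$ subject to $\hat{\mathcal{K}}=k$.

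The crucial step is to exploit the hypothesis that $\tilde{y}$ is \emph{not} an extremal for $\mathcal{K}$. Using Lemma~\ref{asump} to differentiate $\hat{\mathcal{K}}$ and then the integration by parts formula (Theorem~\ref{integr:prop}(c)) together with $h_2(a)=h_2(b)=0$, I would write
$$
\left.\frac{\partial\hat{\mathcal{K}}}{\partial\varepsilon_2}\right|_{(0,0)}
=\int_a^b\left(\partial_2 g(\cdots)-D_{q,\omega}\partial_3 g(\cdots)\right)h_2(qt+\omega)\,d_{q,\omega}t,
$$
where $(\cdots)=(t,\tilde{y}(qt+\omega),D_{q,\omega}\tilde{y}(t))$. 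Since $\tilde{y}$ is not an extremal for $\mathcal{K}$, the bracketed factor does not vanish identically on $[a,b]_{q,\omega}$, so by the Fundamental Lemma (Lemma~\ref{lemma:DR}) I can \emph{choose} $h_2$ making this partial derivative nonzero. With $h_2$ so fixed and $\hat{\mathcal{K}}(0,0)=k$, the Implicit Function Theorem yields a $C^1$ map $\varepsilon_2=\sigma(\varepsilon_1)$ with $\sigma(0)=0$ and $\hat{\mathcal{K}}(\varepsilon_1,\sigma(\varepsilon_1))\equiv k$ for $\varepsilon_1$ near $0$; that is, $\varepsilon_1\mapsto\tilde{y}+\varepsilon_1 h_1+\sigma(\varepsilon_1)h_2$ is admissible for the isoperimetric constraint.

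Next I would observe that $\varepsilon_1\mapsto\hat{\mathcal{L}}(\varepsilon_1,\sigma(\varepsilon_1))$ has a local extremum at $\varepsilon_1=0$, so its derivative vanishes there. Differentiating both this composite and the identity $\hat{\mathcal{K}}(\varepsilon_1,\sigma(\varepsilon_1))=k$ at $\varepsilon_1=0$, eliminating $\sigma'(0)$, and defining
$$
\lambda:=\left.\frac{\partial\hat{\mathcal{L}}}{\partial\varepsilon_2}\right|_{(0,0)}\Big/\left.\frac{\partial\hat{\mathcal{K}}}{\partial\varepsilon_2}\right|_{(0,0)},
$$
I obtain $\partial_{\varepsilon_1}\hat{\mathcal{L}}(0,0)=\lambda\,\partial_{\varepsilon_1}\hat{\mathcal{K}}(0,0)$. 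Expanding both first variations via Lemma~\ref{asump} and integrating by parts as above, this reads
$$
\int_a^b\Big(\partial_2 f(\cdots)-D_{q,\omega}\partial_3 f(\cdots)-\lambda\big(\partial_2 g(\cdots)-D_{q,\omega}\partial_3 g(\cdots)\big)\Big)h_1(qt+\omega)\,d_{q,\omega}t=0.
$$
Since $h_1$ is an \emph{arbitrary} admissible variation, the Fundamental Lemma (Lemma~\ref{lemma:DR}) forces the integrand factor to vanish on $[a,b]_{q,\omega}$; by linearity of $D_{q,\omega}$ and of the partial derivatives this is precisely \eqref{iso} with $F=f-\lambda g$.

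I expect the main obstacle to be the regularity needed to invoke the Implicit Function Theorem: one must know that $\hat{\mathcal{L}}$ and $\hat{\mathcal{K}}$ are genuinely $C^1$ in $(\varepsilon_1,\varepsilon_2)$ near $(0,0)$, not merely that the first variations exist pointwise. Lemma~\ref{fun} supplies differentiation under the Jackson--N\"orlund integral at a single parameter, so the care lies in upgrading this to joint continuous differentiability in the two parameters — leaning on the standing $C^1$ assumptions on $f,g$ and the continuity-at-$\omega_0$ hypotheses — and in verifying that the series defining \eqref{int:2} behave well enough under these perturbations. The other delicate point, already addressed above, is that the admissible variation $h_2$ must be produced by the Fundamental Lemma rather than simply assumed to exist.
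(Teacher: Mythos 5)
Your proposal is correct and follows essentially the same route as the paper: a two-parameter variation $\tilde{y}+\varepsilon_1 h_1+\varepsilon_2 h_2$, the use of normality plus the Fundamental Lemma (Lemma~\ref{lemma:DR}) to produce $h_2$ with nonvanishing $\partial_{\varepsilon_2}$-derivative of the constraint map, the implicit function theorem to stay on the constraint set, a Lagrange-multiplier identity in the two parameters, and a final application of Lemma~\ref{lemma:DR} to the arbitrary variation $h_1$. The only cosmetic difference is that you derive the multiplier $\lambda$ explicitly by differentiating along $\varepsilon_2=\sigma(\varepsilon_1)$ and eliminating $\sigma'(0)$, whereas the paper simply invokes the finite-dimensional Lagrange multiplier rule for $\bar L$ subject to $\bar K=0$ with $\nabla\bar K(0,0)\neq\mathbf{0}$.
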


\begin{proof}
Consider a variation of $\tilde{y}$, say
$\bar{y}=\tilde{y}+\varepsilon_{1}h_{1}+\varepsilon_{2}h_{2}$, where
$h_{i}\in \mathbb{E}$, $h_{i}(a)=h_{i}(b)=0$, and $\varepsilon_{i}$
is a sufficiently small parameter, $i=1,2$. Here, $h_{1}$ is an
arbitrary fixed function and $h_{2}$ is a fixed function that will
be chosen later. Define
\begin{equation*}
\bar{K}(\varepsilon_{1},\varepsilon_{2})=\mathcal{K}[\bar{y}]
=\int_{a}^{b}g(t,\bar{y}(qt+\omega),D_{q,\omega}\bar{y}(t))d_{q,\omega}t-k.
\end{equation*}
We have
\begin{equation*}
\left.\frac{\partial\bar{K}}{\partial
\varepsilon_{2}}\right|_{(0,0)} =\int_a^b \left[ \partial_2g(\cdots)
h_2(qt+\omega) + \partial_3g(\cdots)
D_{q,\omega}h_2(t)\right]d_{q,\omega}t
\end{equation*}
where $(\cdots) =
\left(t,\tilde{y}(qt+\omega),D_{q,\omega}\tilde{y}(t)\right)$.
Integration by parts gives
\begin{equation*}
\left.\frac{\partial\bar{K}}{\partial
\varepsilon_{2}}\right|_{(0,0)} = \int_a^b
\left(\partial_2g(\cdots)-D_{q,\omega}\partial_3g(\cdots)\right)
h_2(qt+\omega)d_{q,\omega}t
\end{equation*}
since $h_{2}(a)=h_{2}(b)=0$. By Lemma~\ref{lemma:DR}, there exists
$h_{2}$ such that $\left.\frac{\partial\bar{K}}{\partial
\varepsilon_{2}}\right|_{(0,0)}\neq 0$. Since $\bar{K}(0,0)=0$,
by the Dini-Ljusternik implicit function theorem we conclude that there exists a
function $\varepsilon_{2}$, defined in the neighborhood of zero, such
that $\bar{K}(\varepsilon_{1},\varepsilon_{2}(\varepsilon_{1}))=0$,
\textrm{i.e.}, we may choose a subset of variations $\bar{y}$
satisfying the
isoperimetric constraint. \\
Let us now consider the real function
\begin{equation*}
\bar{L}(\varepsilon_{1},\varepsilon_{2})=\mathcal{L}[\bar{y}]=\int_a^b
f(t,\bar{y}(qt+\omega),D_{q,\omega}\bar{y}(t)) d_{q,\omega} t.
\end{equation*}
By hypothesis, $(0,0)$ is an extremal of $\bar{L}$ subject to the
constraint $\bar{K}=0$ and $\nabla \bar{K}(0,0)\neq \textbf{0}$. By
the Lagrange multiplier rule, there exists some real $\lambda$, such
that $\nabla(\bar{L}(0,0)-\lambda\bar{K}(0,0))=\textbf{0}$. Having
in mind that $h_{1}(a)=h_{1}(b)=0$, we can write
\begin{equation*}
\left.\frac{\partial\bar{L}}{\partial
\varepsilon_{1}}\right|_{(0,0)} =\int_a^b
\left(\partial_2f(\cdots)-D_{q,\omega}\partial_3f(\cdots)\right)
h_1(qt+\omega)d_{q,\omega}t
\end{equation*}
and
\begin{equation*}
\left.\frac{\partial\bar{K}}{\partial
\varepsilon_{1}}\right|_{(0,0)} =\int_a^b
\left(\partial_2g(\cdots)-D_{q,\omega}\partial_3g(\cdots)\right)
h_1(qt+\omega)d_{q,\omega}t.
\end{equation*}
Therefore,
\begin{equation}
\label{iso:3} \int_a^b
\left[\left(\partial_2f(\cdots)-D_{q,\omega}\partial_3f(\cdots)\right)
-\lambda\left(\partial_2g(\cdots)-D_{q,\omega}\partial_3g(\cdots)\right)\right]
h_1(qt+\omega)d_{q,\omega}t=0.
\end{equation}
As \eqref{iso:3} holds for any $h_{1}$, by Lemma~\ref{lemma:DR} we
have
\begin{equation*}
\partial_2f(\cdots)-D_{q,\omega}\partial_3f(\cdots)
-\lambda\left(\partial_2g(\cdots)-D_{q,\omega}\partial_3g(\cdots)\right)=0.
\end{equation*}
We get \eqref{iso} by writing $F=f-\lambda g$.
\end{proof}

One can easily cover abnormal extremizers within our result by
introducing an extra multiplier $\lambda_{0}$ associated with the
cost functional.

\begin{thm}
\label{th:iso:abn} If $\tilde{y}$ is an extremizer for the
isoperimetric problem \eqref{ivp}--\eqref{civp}, then there exist
two constants $\lambda_{0}$ and $\lambda$, not both zero, such that
\begin{equation}
\label{iso:abn}
D_{q,\omega}\partial_3F(t,\tilde{y}(qt+\omega),D_{q,\omega}\tilde{y}(t))
=\partial_2F(t,\tilde{y}(qt+\omega),D_{q,\omega}\tilde{y}(t))
\end{equation}
for all $t \in [a,b]_{q,\omega}$, where $F=\lambda_0f-\lambda g$.
\end{thm}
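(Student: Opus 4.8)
The plan is to argue by a dichotomy on whether or not $\tilde{y}$ is an extremal for $\mathcal{K}$, since by the definition preceding the theorem these two alternatives are mutually exclusive and together exhaust every extremizer of the isoperimetric problem \eqref{ivp}--\eqref{civp}. The point of the extra multiplier $\lambda_0$ is precisely to merge both alternatives into one uniform conclusion, so the whole argument reduces to choosing $(\lambda_0,\lambda)$ appropriately in each case and checking it is nonzero.

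First I would dispose of the \emph{normal} case, in which $\tilde{y}$ is not an extremal for $\mathcal{K}$. Here the hypotheses of Theorem~\ref{th:iso} are satisfied verbatim, so that theorem furnishes a real number $\lambda$ for which $f-\lambda g$ obeys the Euler--Lagrange equation \eqref{iso} at every $t\in[a,b]_{q,\omega}$. Taking $\lambda_0=1$ and retaining this $\lambda$, we get $F=\lambda_0 f-\lambda g=f-\lambda g$, the pair $(\lambda_0,\lambda)=(1,\lambda)$ is not $\mathbf{0}$, and \eqref{iso:abn} is exactly \eqref{iso}. Thus the normal case is simply the specialization $\lambda_0=1$ of the claimed statement, and requires no new computation.

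Next I would treat the \emph{abnormal} case, in which $\tilde{y}$ is an extremal for $\mathcal{K}$. By the very definition of an extremal for $\mathcal{K}$, the identity $D_{q,\omega}\partial_3 g(t,\tilde{y}(qt+\omega),D_{q,\omega}\tilde{y}(t))=\partial_2 g(t,\tilde{y}(qt+\omega),D_{q,\omega}\tilde{y}(t))$ already holds for all $t\in[a,b]_{q,\omega}$. Choosing $\lambda_0=0$ and $\lambda=-1$ gives $F=\lambda_0 f-\lambda g=g$, so that \eqref{iso:abn} collapses to precisely this extremality identity, while $(\lambda_0,\lambda)=(0,-1)\neq\mathbf{0}$; any other nonzero value of $\lambda$ serves equally well after rescaling $F$. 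Since the two cases cover all extremizers, the theorem follows. I do not anticipate any genuine obstacle: all the analytic work was carried out in Theorem~\ref{th:iso} via the Lagrange multiplier rule, and the present statement only repackages that result together with the trivial observation that an abnormal extremizer automatically satisfies \eqref{iso:abn} with the cost functional switched off.
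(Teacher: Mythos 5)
Your proposal is correct, but it proves the theorem by a genuinely different route than the paper. The paper does not split into cases: it repeats the two-parameter variation construction from the proof of Theorem~\ref{th:iso} (with $\bar{y}=\tilde{y}+\varepsilon_1 h_1+\varepsilon_2 h_2$ and the induced functions $\bar{L}$, $\bar{K}$) and then invokes the \emph{extended} Lagrange multiplier rule (Fritz John--type, citing \cite[Theorem~4.1.3]{Brunt}), which yields a nonzero pair $(\lambda_0,\lambda)$ with $\nabla(\lambda_0\bar{L}(0,0)-\lambda\bar{K}(0,0))=\mathbf{0}$ without any normality hypothesis; integration by parts and Lemma~\ref{lemma:DR} then give \eqref{iso:abn} in one uniform stroke. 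Your dichotomy --- normal extremizer $\Rightarrow$ Theorem~\ref{th:iso} with $\lambda_0=1$; abnormal extremizer $\Rightarrow$ take $\lambda_0=0$, $\lambda\neq 0$, since the Euler--Lagrange equation for $g$ holds by the very definition of an extremal for $\mathcal{K}$ and the equation is linear in $F$ --- is logically complete (the two cases are exhaustive and exclusive by the paper's definitions) and is in fact the content of the Remark the authors place immediately after the theorem. What each approach buys: yours is more elementary, needs no appeal to the extended multiplier rule, and makes transparent that the theorem carries no new analytic content in the abnormal case (the conclusion there is definitional, with the cost functional switched off); the paper's argument is the standard template that derives both multipliers from a single variational computation, stays self-contained rather than quoting Theorem~\ref{th:iso} as a black box, and is the form that scales to more general settings (several isoperimetric constraints, where the case analysis would turn into a rank condition on the constraint gradients rather than a simple two-way split).
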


\begin{proof}
Following the proof of Theorem~\ref{th:iso}, since $(0,0)$ is an
extremal of $\bar{L}$ subject to the constraint $\bar{K}=0$, the
extended Lagrange multiplier rule (see, for instance,
\cite[Theorem~4.1.3]{Brunt}) asserts the existence of reals
$\lambda_{0}$ and $\lambda$, not both zero, such that
$\nabla(\lambda_{0}\bar{L}(0,0)-\lambda\bar{K}(0,0))=\textbf{0}$.
Therefore,
\begin{equation*}
\lambda_{0}\left.\frac{\partial\bar{L}}{\partial
\varepsilon_{1}}\right|_{(0,0)}
-\lambda\left.\frac{\partial\bar{K}}{\partial
\varepsilon_{1}}\right|_{(0,0)}=0
\end{equation*}
\begin{equation}
\label{iso:3:abn} \Leftrightarrow \int_a^b
\left[\lambda_0\left(\partial_2f(\cdots)-D_{q,\omega}\partial_3f(\cdots)\right)
-\lambda\left(\partial_2g(\cdots)-D_{q,\omega}\partial_3g(\cdots)\right)\right]
h_1(qt+\omega)d_{q,\omega}t=0.
\end{equation}
Since \eqref{iso:3:abn} holds for any $h_{1}$, it follows by
Lemma~\ref{lemma:DR} that
\begin{equation*}
\lambda_0\left(\partial_2f(\cdots)-D_{q,\omega}\partial_3f(\cdots)\right)
-\lambda\left(\partial_2g(\cdots)-D_{q,\omega}\partial_3g(\cdots)\right)=0.
\end{equation*}
The desired condition \eqref{iso:abn} follows by taking
$F=\lambda_0f-\lambda g$.
\end{proof}

\begin{rem}
If $\tilde{y}$ is a normal extremizer for the isoperimetric problem
\eqref{ivp}--\eqref{civp}, then we can choose $\lambda_{0}=1$ in
Theorem~\ref{th:iso:abn} and obtain Theorem~\ref{th:iso}. For
abnormal extremizers, Theorem~\ref{th:iso:abn} holds with
$\lambda_{0}=0$. The condition $(\lambda_{0},\lambda)\neq\textbf{0}$
guarantees that Theorem~\ref{th:iso:abn} is a nontrivial necessary
condition. In general we cannot guarantee, a priori, that $\lambda_{0}$ be different from zero.
The interested reader about abnormality is referred to the book \cite{Arutyunov}.
\end{rem}

Suppose now that it is required to find functions $y_1$ and $y_2$
for which the functional
\begin{equation}
\label{funct2}
\mathcal{L}[y_1,y_2]=\int_{a}^{b}f(t,y_1(qt+\omega),y_2(qt+\omega),
D_{q,\omega}y_1(t),D_{q,\omega}y_2(t))d_{q,\omega} t
\end{equation}
has an extremum, where the admissible functions satisfy the boundary
conditions
\begin{equation}
\label{boundconst2} (y_1(a),y_2(a))=(y_1^a,y_2^a) \mbox{ and }
(y_1(b),y_2(b))=(y_1^b,y_2^b),
\end{equation}
and the subsidiary nonholonomic condition
\begin{equation}
\label{subsconst}
g(t,y_1(qt+\omega),y_2(qt+\omega),D_{q,\omega}y_1(t),D_{q,\omega}y_2(t))=0.
\end{equation}
The problem \eqref{funct2}--\eqref{subsconst} can be reduced to the
isoperimetric one by transforming \eqref{subsconst} into a
constraint of the type \eqref{civp}. For that, we multiply both
sides of \eqref{subsconst} by an arbitrary function $\lambda(t)$,
and then take the $q,\omega$-integral from $a$ to $b$. We obtain the
new constraint
\begin{equation}
\label{const}
\mathcal{K}[y_1,y_2]=\int_{a}^{b}\lambda(t)g(t,y_1(qt+\omega),
y_2(qt+\omega),D_{q,\omega}y_1(t),D_{q,\omega}y_2(t))d_{q,\omega}t=0.
\end{equation}
Under the conditions of Theorem~\ref{th:iso}, the solutions
$(y_1,y_2)$ of the isoperimetric problem \eqref{funct2} and
\eqref{const} satisfy the Euler--Lagrange equation for the
functional
\begin{equation}\label{new}
\int_{a}^{b}(f-\tilde{\lambda}(t)g)d_{q,\omega}t,
\end{equation}
$\tilde{\lambda}(t)=\bar{\lambda}\lambda(t)$ for some constant
$\bar{\lambda}$. Since \eqref{const} follows from \eqref{subsconst},
the solutions of problem \eqref{funct2}--\eqref{subsconst} satisfy
as well the Euler--Lagrange equation for functional \eqref{new}.

%----------------------------------------------------------

\subsection{Sufficient Conditions}
\label{ssuff}

In this section, we prove sufficient conditions that ensure the
existence of minimum (maximum). Similarly to what happens in the
classical calculus of variations, some hypotheses of convexity
(concavity) are in order.

\begin{defin}
Given a function $f$, we say that $f(\underline t,u,v)$ is jointly
convex (concave) in $(u,v)$, iff $\partial_i f$, $i=2,3$, exist and
are continuous and verify the following condition:
\begin{equation*}
f(t,u+u_1,v+v_1)-f(t,u,v) \geq (\leq) \partial_2
f(t,u,v)u_1+\partial_3 f(t,u,v)v_1
\end{equation*}
for all $(t,u,v)$,$(t,u+u_1,v+v_1)\in
[a,b]_{q,\omega}\times\mathbb{R}^2$.
\end{defin}

\begin{thm}
\label{suff} Let $f(\underline t,u,v)$ be jointly convex (concave)
in $(u,v)$. If $\tilde{y}$ satisfies condition \eqref{Euler}, then
$\tilde{y}$ is a global minimizer (maximizer) to problem
\eqref{vp}--\eqref{bc}.
\end{thm}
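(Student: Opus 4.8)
The plan is to show directly that for any admissible competitor $y$, the difference $\mathcal{L}[y] - \mathcal{L}[\tilde{y}]$ has the correct sign. Writing $y = \tilde{y} + h$ where $h$ is an admissible variation (so $h(a) = h(b) = 0$), I would start from the definition
\begin{equation*}
\mathcal{L}[y] - \mathcal{L}[\tilde{y}] = \int_a^b \left[ f(t, \tilde{y}(qt+\omega) + h(qt+\omega), D_{q,\omega}\tilde{y}(t) + D_{q,\omega}h(t)) - f(t, \tilde{y}(qt+\omega), D_{q,\omega}\tilde{y}(t)) \right] d_{q,\omega} t,
\end{equation*}
using that $D_{q,\omega}y = D_{q,\omega}\tilde{y} + D_{q,\omega}h$ by linearity of the operator (Theorem~\ref{derv:prop}(b)(i)). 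The joint convexity hypothesis applied pointwise with $u = \tilde{y}(qt+\omega)$, $v = D_{q,\omega}\tilde{y}(t)$, $u_1 = h(qt+\omega)$, $v_1 = D_{q,\omega}h(t)$ gives, for each $t \in [a,b]_{q,\omega}$,
\begin{equation*}
f(t, \tilde{y}(qt+\omega) + h(qt+\omega), D_{q,\omega}\tilde{y}(t) + D_{q,\omega}h(t)) - f(\cdots) \geq \partial_2 f(\cdots) h(qt+\omega) + \partial_3 f(\cdots) D_{q,\omega}h(t),
\end{equation*}
where $(\cdots) = (t, \tilde{y}(qt+\omega), D_{q,\omega}\tilde{y}(t))$.

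Next I would integrate this pointwise inequality. Here I expect to invoke monotonicity of the $q,\omega$-integral, so I should confirm that Lemma~\ref{integr:ineq} (or a positivity consequence of it) licenses the step that a pointwise nonnegative integrand yields a nonnegative integral over $[a,b]_{q,\omega}$; this is the point deserving the most care, since the excerpt explicitly warns that the naive inequality \eqref{surp:ineq} can fail. Once integration is justified, I obtain
\begin{equation*}
\mathcal{L}[y] - \mathcal{L}[\tilde{y}] \geq \int_a^b \left[ \partial_2 f(\cdots) h(qt+\omega) + \partial_3 f(\cdots) D_{q,\omega}h(t) \right] d_{q,\omega} t.
\end{equation*}

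The final step is to recognize the right-hand side as exactly $\delta\mathcal{L}[\tilde{y}, h]$ in the form given by Lemma~\ref{asump}, and to show it vanishes. Applying integration by parts (Theorem~\ref{integr:prop}(c)) to the $\partial_3 f$ term and using the boundary conditions $h(a) = h(b) = 0$, this integral reduces to
\begin{equation*}
\int_a^b \left( \partial_2 f(\cdots) - D_{q,\omega}\partial_3 f(\cdots) \right) h(qt+\omega) d_{q,\omega} t,
\end{equation*}
which is identically zero because $\tilde{y}$ satisfies the Euler--Lagrange equation \eqref{Euler} by hypothesis. Therefore $\mathcal{L}[y] - \mathcal{L}[\tilde{y}] \geq 0$ for every admissible $y$, i.e.\ $\tilde{y}$ is a global minimizer; the concave case is entirely symmetric with inequalities reversed, giving a global maximizer. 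The main obstacle is the middle step: the positivity/monotonicity of the Jackson--N\"orlund integral is genuinely subtle in this setting (as signaled by the failure of \eqref{surp:ineq}), so I would need to either restrict the argument to the points where Lemma~\ref{integr:ineq} applies or verify that the convexity inequality holds on the full support of the integration, making the integral of a nonnegative integrand nonnegative.
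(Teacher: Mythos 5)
Your proposal is correct and follows essentially the same route as the paper's proof: apply joint convexity pointwise, integrate the resulting inequality, then integrate by parts using $h(a)=h(b)=0$ and the Euler--Lagrange equation \eqref{Euler} to make the right-hand side vanish. The monotonicity subtlety you flag is real (it is exactly the issue behind the failure of \eqref{surp:ineq}), but the paper's own proof performs the very same step---passing from the pointwise convexity inequality to an inequality between $q,\omega$-integrals---without comment, so on that point your attempt is, if anything, more careful than the published argument.
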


\begin{proof}
We give the proof for the convex case. Since $f$ is jointly convex
in $(u,v)$ for any admissible function $\tilde{y}+h$, we have
\begin{equation*}
\begin{split}
\mathcal{L}&(\tilde{y}+h)-\mathcal{L}(\tilde{y})\\
&=\int_a^b\left[f(t,\tilde{y}(qt+\omega)
+h(qt+\omega),D_{q,\omega}\tilde{y}(t)+D_{q,\omega}h(t))-f(t,\tilde{y}(qt+\omega),
D_{q,\omega}\tilde{y}(t))\right]d_{q,\omega}\\
&\geq\int_a^b\left[\partial_2f(t,\tilde{y}(qt+\omega),D_{q,\omega}\tilde{y}(t))h(qt+\omega)
+\partial_3f(t,\tilde{y}(qt+\omega),D_{q,\omega}\tilde{y}(t))D_{q,\omega}h(t)\right]d_{q,\omega}.
\end{split}
\end{equation*}
We can now proceed analogously to the proof of Theorem~\ref{thm:mr}.
As the result we get
\begin{multline*}
\mathcal{L}(\tilde{y}+h)-\mathcal{L}(\tilde{y})
\geq\left.\partial_3f(t,\tilde{y}(qt+\omega),
\tilde{y}(t))h(t)\right|_{t=a}^{t=b}\\
+\int_a^b \left(\partial_2f(t,\tilde{y}(qt+\omega),\tilde{y}(t))
-D_{q,\omega}\partial_3f(t,\tilde{y}(qt+\omega),\tilde{y}(t))\right)
h(qt+\omega)d_{q,\omega}t.
\end{multline*}
Since $\tilde{y}$ satisfy conditions \eqref{Euler} and
$h(a)=h(b)=0$, we obtain $\mathcal{L}(\tilde{y}+h) -
\mathcal{L}(\tilde{y}) \geq 0$.
\end{proof}

%----------------------------------------------------------

\subsection{Leitmann's Direct Method}
\label{Leitmann}

Leitmann's direct method permits to
compute global solutions to some problems
that are variationally invariant
under a family of transformations
\cite{Leitmann67,Leit,Leitmann01,SilvaTorres06,withLeitmann}.
It should be mentioned that such invariance transformations
are useful not only in connection
with Leitmann's method but also
to apply Noether's theorem \cite{Torres:EJC,Torres:CPAA}.
Moreover, the invariance transformations are
related with the notion of Carath\'{e}odory equivalence
\cite{Car,Torres:JMS}.

Recently, it has been noticed by the authors
that the invariance transformations,
that keep the Lagrangian invariant, do not depend
on the time scale \cite{mal:tor}.
This is also true for the generalized Hahn quantum
setting that we are considering in this work:
given a Lagrangian
$f : \mathbb{R} \times \mathbb{R} \times \mathbb{R} \rightarrow \mathbb{R}$,
the invariance transformations, that keep it invariant up to a gauge term,
are exactly the same if the Lagrangian $f$ is used
to define a Hahn quantum functional \eqref{vp} or a classical
functional $\mathcal{L}[y] =\int_{a}^{b} L(t,y(t),y'(t))dt$
of the calculus of variations. Thus, if the quantum problem we want to solve admits
an enough rich family of invariance transformations,
that keep it invariant up to a gauge term,
then one need not to solve a Hahn quantum Euler--Lagrange equation
to find its minimizer: instead, we can try to use Leitmann's direct method.
The question of how to find the invariance
transformations is addressed in
\cite{GouveiaTorres05,GouveiaTorresRocha}.

Let $\bar{f}:[a,b]_{q,\omega}\times \mathbb{R} \times \mathbb{R}
\rightarrow \mathbb{R}$. We assume $(u,v)\rightarrow \bar{f}(t,u,v)$
is a $C^1(\mathbb{R}^{2}, \mathbb{R})$ function for any $t \in
[a,b]_{q,\omega}$, and
$\bar{f}(\cdot,\bar{y}(\cdot),D_{q,\omega}\bar{y}(\cdot))$,
$\partial_2\bar{f}(\cdot,\bar{y}(\cdot),D_{q,\omega}\bar{y}(\cdot))$,
and
$\partial_3\bar{f}(\cdot,\bar{y}(\cdot),D_{q,\omega}\bar{y}(\cdot))$
are continuous in $\omega_0$ for any admissible function
$\bar{y}(\cdot)$. Consider the integral
\begin{equation*}
\bar{\mathcal{L}}[\bar{y}]
=\int_a^b\bar{f}(t,\bar{y}(qt+\omega),D_{q,\omega}\bar{y}(t))d_{q,\omega} t.
\end{equation*}

\begin{lem}[Leitmann's fundamental lemma via Hahn's quantum operator]
\label{Fund:lemma:Leit} Let $y=z(t,\bar{y})$ be a transformation
having an unique inverse $\bar{y}=\bar{z}(t,y)$ for all $t\in
[a,b]_{q,\omega}$, such that there is a one-to-one correspondence
\begin{equation*}
y(t)\Leftrightarrow \bar{y}(t)
\end{equation*}
for all functions $y\in \mathbb{E}$ satisfying \eqref{bc} and all
functions $\bar{y}\in \mathbb{E}$ satisfying
\begin{equation}
\label{bc:trans} \bar{y}=\bar{z}(a,\alpha), \quad
\bar{y}=\bar{z}(b,\beta).
\end{equation}
If the transformation $y=z(t,\bar{y})$ is such that there exists a
function $G:[a,b]_{q,\omega} \times \mathbb{R} \rightarrow
\mathbb{R}$ satisfying the functional identity
\begin{equation}
\label{id} f(t,y(qt+\omega),D_{q,\omega}y(t))
-\bar{f}(t,\bar{y}(qt+\omega),D_{q,\omega}\bar{y}(t))
=D_{q,\omega}G(t,\bar{y}(t)) \, ,
\end{equation}
then if  $\bar{y}^{*}$ yields the extremum of $\bar{\mathcal{L}}$
with $\bar{y}^{*}$ satisfying \eqref{bc:trans},
$y^{*}=z(t,\bar{y}^{*})$ yields the extremum of $\mathcal{L}$ for
$y^{*}$ satisfying \eqref{bc}.
\end{lem}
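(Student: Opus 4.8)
The plan is to show that, restricted to the admissible classes, the two functionals $\mathcal{L}$ and $\bar{\mathcal{L}}$ differ only by a constant that does not depend on the chosen path; once this is in hand, the extremum property transfers immediately through the hypothesized one-to-one correspondence. First I would take the functional identity \eqref{id}, which holds pointwise for every $t\in[a,b]_{q,\omega}$ and every admissible pair $(y,\bar{y})$ linked by $y=z(t,\bar{y})$, and apply the $q,\omega$-integral $\int_a^b(\cdot)\,d_{q,\omega}t$ to both sides. By linearity of the integral (Theorem~\ref{integr:prop}(a)), the left-hand side becomes exactly $\mathcal{L}[y]-\bar{\mathcal{L}}[\bar{y}]$.

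The second step is to evaluate the right-hand side. Since its integrand is a $q,\omega$-derivative, the fundamental theorem of the Jackson--N\"orlund calculus gives
$$
\int_a^b D_{q,\omega}G(t,\bar{y}(t))\,d_{q,\omega}t = G(b,\bar{y}(b)) - G(a,\bar{y}(a)).
$$
Here is the crux: every admissible $\bar{y}$ satisfies the boundary conditions \eqref{bc:trans}, so $\bar{y}(a)=\bar{z}(a,\alpha)$ and $\bar{y}(b)=\bar{z}(b,\beta)$ are fixed numbers, the same for all competitors. Hence the right-hand side equals a constant $C:=G(b,\bar{z}(b,\beta))-G(a,\bar{z}(a,\alpha))$ that is independent of the particular path, and we conclude that $\mathcal{L}[y]=\bar{\mathcal{L}}[\bar{y}]+C$ for every admissible $y$ satisfying \eqref{bc} and its corresponding $\bar{y}$.

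Finally I would transfer the extremum. Assume $\bar{y}^{*}$ yields, say, a minimum of $\bar{\mathcal{L}}$ among all $\bar{y}\in\mathbb{E}$ satisfying \eqref{bc:trans}, and set $y^{*}=z(t,\bar{y}^{*})$. Given any admissible $y\in\mathbb{E}$ satisfying \eqref{bc}, the one-to-one correspondence furnishes a unique $\bar{y}\in\mathbb{E}$ satisfying \eqref{bc:trans}, and the constant-difference relation yields
$$
\mathcal{L}[y]=\bar{\mathcal{L}}[\bar{y}]+C\ge\bar{\mathcal{L}}[\bar{y}^{*}]+C=\mathcal{L}[y^{*}],
$$
so $y^{*}$ minimizes $\mathcal{L}$; the maximization case is identical with the inequality reversed.

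The computation itself is essentially routine, resting only on the fundamental theorem and on linearity of the $q,\omega$-integral. The one genuinely delicate point --- which I would state explicitly rather than gloss over --- is that $C$ must be the \emph{same} constant for an arbitrary competitor $y$ and for the candidate $y^{*}$; this is precisely what the fixed endpoints \eqref{bc:trans} guarantee, and it is the reason the boundary conditions, and not merely the identity \eqref{id}, must appear among the hypotheses. (A secondary technical caveat is that $G(\cdot,\bar{y}(\cdot))$ should be $q,\omega$-integrable, i.e.\ fall under the continuity-at-$\omega_0$ assumptions already standing for the data, so that the fundamental theorem applies.)
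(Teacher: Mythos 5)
Your proof is correct and follows essentially the same route as the paper's: integrate the identity \eqref{id} over $[a,b]$, apply the fundamental theorem of the Jackson--N\"orlund calculus to reduce the right-hand side to $G(b,\bar{z}(b,\beta))-G(a,\bar{z}(a,\alpha))$, observe this is a path-independent constant by the fixed boundary conditions, and transfer the extremum through the one-to-one correspondence. Your write-up is in fact slightly more explicit than the paper's (which ends with ``the conclusion follows immediately''), spelling out the inequality $\mathcal{L}[y]=\bar{\mathcal{L}}[\bar{y}]+C\geq\bar{\mathcal{L}}[\bar{y}^{*}]+C=\mathcal{L}[y^{*}]$ and the integrability caveat, but the substance is identical.
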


\begin{rem}
The functional identity \eqref{id} is exactly
the definition of variationally invariance
when we do not consider
transformations of the time variable $t$
(\textrm{cf.} (4) and (5) of \cite{withLeitmann}).
Function $G$ that appears in \eqref{id}
is sometimes called a gauge term \cite{Torres:CPAA}.
\end{rem}

\begin{proof}
The proof is similar in spirit to Leitmann's proof
\cite{Leitmann67,Leit,Leitmann01,MR2035262}. Let  $y\in \mathbb{E}$
satisfy \eqref{bc}, and define functions $\bar{y}\in \mathbb{E}$
through the formula $\bar{y}=\bar{z}(t,y)$, $t\in[a,b]_{q,\omega}$.
Then $\bar{y}\in \mathbb{E}$ and satisfies \eqref{bc:trans}.
Moreover, as a result of \eqref{id}, it follows that
\begin{equation*}
\begin{split}
\mathcal{L}[y]-\bar{\mathcal{L}}[\bar{y}]
&=\int_{a}^{b}f(t,y(qt+\omega),D_{q,\omega}y(t)) d_{q,\omega} t
-\int_{a}^{b}\bar{f}(t,\bar{y}(qt+\omega),D_{q,\omega}\bar{y}(t)) d_{q,\omega} t\\
&=\int_{a}^{b}D_{q,\omega}G(t,\bar{y}(t)) d_{q,\omega} t
=G(b,\bar{y}(b))-G(a,\bar{y}(a))\\
&=G(b,\bar{z}(b,\beta))-G(a,\bar{z}(a,\alpha)),
\end{split}
\end{equation*}
from which the desired conclusion follows immediately since the
right-hand side of the above equality is a constant, depending only
on the fixed-endpoint conditions \eqref{bc}.
\end{proof}

Examples~\ref{example2}, \ref{example4} and \ref{example3}
in the next section illustrate the applicability of Lemma~\ref{Fund:lemma:Leit}.
The procedure is as follows: (i)
we use the computer algebra package described in \cite{GouveiaTorres05}
and available from the \emph{Maple Application Center} at
\url{http://www.maplesoft.com/applications/view.aspx?SID=4805}
to find the transformations
that keep the problem of the calculus
of variations or optimal control invariant; (ii)
we use such invariance transformations
to solve the Hahn quantum variational problem by applying
Leitmann's fundamental lemma (Lemma~\ref{Fund:lemma:Leit}).

%----------------------------------------------------------

\subsection{Illustrative Examples}
\label{exam}

We provide some examples in order to illustrate our main results.

\begin{ex}
\label{example1} Let $q$, $\omega$ be fixed real numbers, and $I$ be
a closed interval of $\mathbb{R}$ such that $\omega_0,0,1\in I$.
Consider the problem
\begin{equation}
\label{AD} \text{minimize} \quad \mathcal{L}[y]
=\int_{0}^{1}\left(y(qt+\omega) +\frac{1}{2}
(D_{q,\omega}y(t))^2\right)d_{q,\omega} t
\end{equation}
subject to the boundary conditions
\begin{equation}
\label{eq:bc} y(0)=0,  \quad y(1)=1.
\end{equation}
If $y$ is a local minimizer to problem \eqref{AD}--\eqref{eq:bc},
then by Theorem~\ref{thm:mr} it satisfies the Euler--Lagrange
equation
\begin{equation}
\label{EL:ex} D_{q,\omega}D_{q,\omega}y(t)=1
\end{equation}
for all $t\in\{\omega[n]_q:
n\in\mathbb{N}_{0}\}\cup\{q^n+\omega[n]_q: n\in\mathbb{N}_{0}\} \cup
\{\omega_0\}$. By direct substitution it can be verified that
$y(t)=\frac{1}{q+1}t^2-\frac{1}{q+1}t$ is a candidate solution to
problem \eqref{AD}--\eqref{eq:bc}.
\end{ex}

In next examples we solve quantum variational problems using
Leitmann's direct method (see Sect.~\ref{Leitmann}).

\begin{ex}
\label{example2} Let $q$, $\omega$, and $a$, $b$ ($a < b$) be
fixed real numbers, and $I$ be a closed interval of $\mathbb{R}$
such that $\omega_0\in I$ and $a,b \in
\{q^ns+[n]_{q,\omega} : n\in\mathbb{N}_{0}\}\cup\{\omega_0\}$
for some $s \in I$.
Let $\alpha$ and $\beta$ be two given reals, $\alpha \ne \beta$.
We consider the following problem:
\begin{equation}
\label{illust:Ex:mod}
\begin{gathered}
\text{minimize} \quad \mathcal{L}[y] =\int_{a}^{b}
\left((D_{q,\omega}y(t))^2
+y(qt+\omega) +t D_{q,\omega}y(t)\right) d_{q,\omega} t \, , \\
y(a)=\alpha \, , \quad y(b)=\beta \, .
\end{gathered}
\end{equation}
We transform problem \eqref{illust:Ex:mod} into the trivial problem
\begin{equation*}
 \text{minimize} \quad
 \bar{\mathcal{L}}[\bar{y}]=\int_{a}^{b}(D_{q,\omega}\bar{y}(t))^2
d_{q,\omega} t \, , \quad
\bar{y}(a)=0 \, , \quad \bar{y}(b)=0 \, ,
\end{equation*}
which has solution $\bar{y}\equiv 0$. For that we consider the
transformation
\begin{equation*}
y(t)=\bar{y}(t)+ct+d, \quad c,d\in \mathbb{R},
\end{equation*}
where constants $c$ and $d$ will be chosen later. According to the
above, we have
\begin{equation*}
D_{q,\omega}y(t)=D_{q,\omega}\bar{y}(t)+c, \quad
y(qt+\omega)=\bar{y}(qt+\omega)+c(qt+\omega)+d,
\end{equation*}
and
\begin{equation*}
\begin{split}
(D_{q,\omega}y(t))^2
&+y(qt+\omega) +t D_{q,\omega}y(t)\\
&=(D_{q,\omega}\bar{y}(t))^2 +2cD_{q,\omega}\bar{y}(t)
+c^2+\bar{y}(qt+\omega)+c(qt+\omega)
+d + tD_{q,\omega}\bar{y}(t)+ct\\
&=(D_{q,\omega}\bar{y}(t))^2 +
D_{q,\omega}[2c\bar{y}(t)+t\bar{y}(t)+ct^2+(c^2+d)t].
\end{split}
\end{equation*}
In order to obtain the solution to the original problem, it suffices
to chose $c$ and $d$ so that
\begin{equation}
\label{eq:const:mod}
ca+d=\alpha\, , \quad
cb+d=\beta \, .
\end{equation}
Solving the system of equations \eqref{eq:const:mod} we obtain
$c=\frac{\alpha-\beta}{a-b}$ and $d =\frac{\beta a-b\alpha}{a-b}$.
Hence, the global minimizer for problem \eqref{illust:Ex:mod} is
\begin{equation*}
y(t)=\frac{\alpha-\beta}{a-b}t+\frac{\beta a-b\alpha}{a-b}.
\end{equation*}
\end{ex}

\begin{ex}
\label{example4}
Let $q$, $\omega$, and $a$, $b$ ($a < b$) be
fixed real numbers, and $I$ be a closed interval of $\mathbb{R}$
such that $\omega_0\in I$ and $a,b \in
\{q^ns+[n]_{q,\omega} : n\in\mathbb{N}_{0}\}\cup\{\omega_0\}$
for some $s \in I$. Let $\alpha$ and $\beta$ be two given
reals, $\alpha \ne \beta$. We consider the following problem:
\begin{equation}
\label{illust:Ex:4}
\text{minimize} \quad \mathcal{L}[y] =\int_{a}^{b}
\left[D_{q,\omega}(y(t)g(t))\right]^2 d_{q,\omega} t \, , \quad
y(a)=\alpha \, , \quad y(b)=\beta \, ,
\end{equation}
where $g$ does not vanish on the interval $[a,b]_{q,\omega}$.
Observe that $\bar{y}(t) = g^{-1}(t)$ minimizes $\mathcal{L}$ with
end conditions $\bar{y}(a) = g^{-1}(a)$ and $\bar{y}(b) =
g^{-1}(b)$. Let $y(t)=\bar{y}(t)+p(t)$. Then
\begin{equation}
\label{tran} \left[D_{q,\omega}(y(t)g(t))\right]^2
=\left[D_{q,\omega}(\bar{y}(t)g(t))\right]^2
+D_{q,\omega}(p(t)g(t))D_{q,\omega}\left(2\bar{y}(t)g(t)+p(t)g(t)\right).
\end{equation}
Consequently, if $p(t) = (At + B)g^{-1}(t)$, where $A$ and $B$ are
constants, then \eqref{tran} is of the form \eqref{id}, since
$D_{q,\omega}(p(t)g(t))$ is constant. Thus, the function
\begin{equation*}
y(t)=(At+C)g^{-1}(t)
\end{equation*}
with
\begin{equation*}
A=\left[\alpha g(a)-\beta g(b)\right](a-b)^{-1}, \quad
C=\left[a\beta g(b)-b\alpha g(a)\right](a-b)^{-1},
\end{equation*}
minimizes \eqref{illust:Ex:4}.
\end{ex}

Using the idea of Leitmann, we can also solve quantum optimal
control problems defined in terms of Hahn's operators.

\begin{ex}
\label{example3}
Let $q$, $\omega$ be real numbers on
a closed interval $I$ of $\mathbb{R}$ such that
$\omega_0\in I$ and
$0,1 \in \{q^ns+[n]_{q,\omega} : n\in\mathbb{N}_{0}\}\cup\{\omega_0\}$
for some $s \in I$.
Consider the global minimum problem
\begin{equation}
\label{ex1:a} \text{minimize} \quad \mathcal{L}[u_1,u_2]
= \int_0^1 \left((u_1(t))^2 + u_2(t))^2\right) d_{q,\omega} t\\
\end{equation}
subject to the control system
\begin{equation}
\label{ex1:b}
D_{q,\omega}y_1(t) = \exp(u_1(t)) + u_1(t) + u_2(t) \, ,
\quad
D_{q,\omega}y_2(t) = u_2(t) \, ,
\end{equation}
and conditions
\begin{equation}
\label{ex1:c}
y_1(0) = 0 \, , \quad y_1(1) = 2 \, , \quad
y_2(0) = 0 \, , \quad y_2(1) = 1 \, , \quad
u_1(t) \, , u_2(t) \in \Omega = [-1,1] \, .
\end{equation}
This example is inspired from \cite{withLeitmann}. It is worth to
mention that due to the constraints on the values of the controls
($u_1(t)$, $u_2(t) \in \Omega = [-1,1]$), a theory based
on necessary optimality conditions to solve problem
\eqref{ex1:a}--\eqref{ex1:c} does not exist at the moment.

We begin noticing that problem \eqref{ex1:a}--\eqref{ex1:c} is
variationally invariant according to \cite{GouveiaTorres05} under
the one-parameter family of transformations
\begin{equation}
\label{transfEx1} y_1^s = y_1 + s t \, , \quad y_2^s = y_2 + s t \,
, \quad u_2^s = u_2 + s \quad (t^s = t \text{ and } u_1^s = u_1) \,
.
\end{equation}
To prove this, we need to show that both the functional integral
$\mathcal{L}$ and the control system stay invariant under the
$s$-parameter transformations \eqref{transfEx1}. This is easily seen
by direct calculations:
\begin{equation}
\label{inv:Func:Ex1}
\begin{split}
\mathcal{L}^s[u_1^s,u_2^s]&=
\int_0^1 \left(u_1^{s}(t)\right)^2 + \left(u_2^{s}(t)\right)^2  d_{q,\omega} t\\
&= \int_0^1 u_1(t)^2 + \left(u_2(t) + s \right)^2 d_{q,\omega} t \\
&= \int_0^1 \left( u_1(t)^2 + u_2(t)^2
+  D_{q,\omega} t[s^2 t + 2 s y_2(t)]\right) d_{q,\omega} t\\
&= \mathcal{L}[u_1,u_2] + s^2 + 2s \, .
\end{split}
\end{equation}
We remark that $\mathcal{L}^s$ and $\mathcal{L}$ have the same
minimizers: adding a constant $s^2 + 2s$ to the functional
$\mathcal{L}$ does not change the minimizer of $\mathcal{L}$. It
remains to prove that the control system also remains invariant
under transformations \eqref{transfEx1}:
\begin{equation}
\label{inv:CS:Ex1}
\begin{split}
D_{q,\omega}\left(y_1^{s}(t)\right) &= D_{q,\omega}\left(y_1(t) + s
t \right)
= D_{q,\omega}y_1 + s = \exp(u_1(t)) + u_1(t) + u_2(t) + s \\
&= \exp(u_1^s(t)) + u_1^s(t) + u_2^s(t) \, , \\
D_{q,\omega}\left(y_2^{s}(t)\right) &= D_{q,\omega}\left(y_2(t) +
st\right)
= D_{q,\omega}y_2 + s = u_2(t) + s \\
&= u_2^{s}(t) \, .
\end{split}
\end{equation}
Conditions \eqref{inv:Func:Ex1} and \eqref{inv:CS:Ex1} prove that
problem \eqref{ex1:a}--\eqref{ex1:c} is invariant under the
$s$-parameter transformations \eqref{transfEx1} up to
$D_{q,\omega}\left(s^2 t + 2s y_2(t)\right)$. Using the invariance
transformations \eqref{transfEx1}, we generalize problem
\eqref{ex1:a}--\eqref{ex1:c} to a $s$-parameter family of problems,
$s \in \mathbb{R}$, which include the original problem for $s=0$:
\begin{equation*}
\text{minimize} \quad \mathcal{L}^{s}[u_1,u_2] = \int_0^1
(u_1^s(t))^2 + (u_2^s(t))^2 d_{q,\omega}t
\end{equation*}
subject to the control system
\begin{equation*}
D_{q,\omega}\left(y_1^s(t)\right) = \exp(u_1^s(t)) + u_1^s(t) + u_2^s(t) \, ,\quad
D_{q,\omega}\left(y_2^s(t)\right) = u_2^s(t) \, ,
\end{equation*}
and conditions
\begin{equation*}
\begin{gathered}
y_1^s(0) = 0 \, , \quad y_1^s(1) = 2 + s \, , \quad
y_2^s(0) = 0 \, , \quad y_2^s(1) = 1 + s \, , \\
u_1^s(t) \in [-1,1] \, ,  \quad u_2^s(t) \in [-1+s,1+s] \, .
\end{gathered}
\end{equation*}
It is clear that $ \mathcal{L}^{s}\geq 0$ and that $\mathcal{L}^s=0$
if $u_1^{s}(t)=u_2^{s}(t) \equiv 0$. The control equations, the
boundary conditions and the constraints on the values of the
controls imply that $u_1^{s}(t)=u_2^{s}(t) \equiv 0$ is admissible
only if $s=-1$: $y_1^{s=-1}(t) = t$, $y_2^{s=-1}(t) \equiv 0$.
Hence, for $s= -1$ the global minimum to $\mathcal{L}^s$ is 0 and
the minimizing trajectory is given by
\begin{equation*}
\tilde{u}_1^{s}(t) \equiv 0  \, , \quad \tilde{u}_2^{s}(t) \equiv 0
\, , \quad \tilde{y}_1^{s}(t)= t  \, , \quad \tilde{y}_2^{s}(t)
\equiv 0 \, .
\end{equation*}
Since for any $s$ one has by \eqref{inv:Func:Ex1} that
$\mathcal{L}[u_1,u_2] = \mathcal{L}^s[u_1^s,u_2^s] - s^2 - 2s$, we
conclude that the global minimum for problem $\mathcal{L}[u_1,u_2]$
is 1. Thus, using the inverse functions of the variational
symmetries \eqref{transfEx1},
\begin{equation*}
u_1(t) = u_1^{s}(t) \, , \quad u_2(t) = u_2^{s}(t)- s \, , \quad
y_1(t) = y_1^{s}(t) - s t \, , \quad y_2(t) = y_2^{s}(t) - s t \, ,
\end{equation*}
and the absolute minimizer for problem \eqref{ex1:a}--\eqref{ex1:c}
is
\begin{equation*}
\tilde{u}_1(t) = 0 \, , \quad \tilde{u}_2(t) = 1 \, , \quad
\tilde{y}_1(t) = 2 t \, , \quad \tilde{y}_2(t) = t \, .
\end{equation*}
\end{ex}

%----------------------------------------------------------

\subsection{An Application Towards a Quantum Ramsey Model}
\label{app}

As the variables, that are usually considered and observed by the
economist, are the outcome of a great number of decisions, taken by
different operators at different points of time, it seems natural to
look for new kinds of models which are more flexible and
realistic. Hahn's approach allows for more complex applications
than the discrete or the continuous models. A consumer might have
income from work at unequal time intervals and/or make expenditures
at unequal time intervals. Therefore, it is possible to obtain
more rigorous and more accurate solutions with the approach here proposed.

We discuss the application of the Hahn quantum variational calculus
to the Ramsey model, which determines the behavior of
saving/consumption as the result of optimal inter-temporal choices
by individual households \cite{Atici}. For a complete treatment of
the classical Ramsey model we refer the reader to \cite{barro}.
Before writing the quantum model in terms of the Hahn operators we
will present its discrete and continuous versions. The discrete-time
Ramsey model is
\begin{equation*}
\max_{[W_t]} \quad
\sum_{t=0}^{T-1}(1+p)^{-t}U\left[W_t-\frac{W_{t+1}}{1+r}\right],
\quad C_t=W_t-\frac{W_{t+1}}{1+r},
\end{equation*}
while the continuous Ramsey model is
\begin{equation}
\label{c:RM} \max_{W(\cdot)} \quad
\int_{0}^{T}e^{-pt}U\left[rW(t)-W'(t)\right]dt, \quad
C(t)=rW(t)-W'(t),
\end{equation}
where the quantities are defined as
\begin{itemize}
\item $W$ -- production function,
\item $C$ -- consumption,
\item $p$ -- discount rate,
\item $r$ -- rate of yield,
\item $U$ -- instantaneous utility function.
\end{itemize}

One may assume, due to some constraints of economical nature, that
the dynamics do not depend on the usual derivative or the forward
difference operator, but on the Hahn quantum difference operator
$D_{q,\omega}$. In this condition, one is entitled to assume again
that the constraint $C(t)$ has the form
\begin{equation*}
C(t)=-\left[E\left(-r,\frac{t-\omega}{q}\right)\right]^{-1}
D_{q,\omega}\left[E\left(-r,\frac{t-\omega}{q}\right)W(t)\right],
\end{equation*}
where $E\left(z,\cdot\right)$ is the $q,\omega$-exponential function
defined by
\begin{equation*}
E\left(z,t\right):=\prod _{k=0}^{\infty}(1+zq^k(t(1-q)-\omega))
\end{equation*}
for $z\in \mathbb{C}$. Several nice properties of the
$q,\omega$-exponential function can be found in
\cite{Aldwoah,Annaby}. By taking the $q,\omega$-derivative of
$\left[E\left(-r,\frac{t-\omega}{q}\right)W(t)\right]$ the following
is obtained:
\begin{multline*}
C(t)=-\left[E\left(-r,\frac{t-\omega}{q}\right)\right]^{-1}\left[
E\left(-r,\frac{t-\omega}{q}\right)D_{q,\omega}W(t)\right.\\
\left.+E\left(-r,\frac{t-\omega}{q}\right)W(qt+\omega)
\frac{r\left(1-\frac{1}{q}\right)-r\left(1+r\left(t
-\frac{t-\omega}{q}\right)\right)}{\left(1+r\left(t
-\frac{t-\omega}{q}\right)\right)\left(1-r\left(t(1-q)
-\omega\right)\right)}\right].
\end{multline*}
The quantum Ramsey model with the Hahn difference operator consists to
\begin{equation}
\label{q:RM} \max_{W(\cdot)} \quad
\int_{0}^{T}E(-p,t)U\left[W(qt+\omega)
\frac{r\left(1+r\left(t-\frac{t-\omega}{q}\right)\right)
-r\left(1-\frac{1}{q}\right)}{\left(1+r\left(t-\frac{t-\omega}{q}\right)
\right)\left(1-r\left(t(1-q)-\omega\right)\right)}-D_{q,\omega}W(t)\right]d_{q,\omega}
\end{equation}
subject to the constraint
\begin{equation}
\label{eq:const:qRM} C(t)= W(qt+\omega)
\frac{r\left(1+r\left(t-\frac{t-\omega}{q}\right)\right)
-r\left(1-\frac{1}{q}\right)}{\left(1+r\left(t-\frac{t-\omega}{q}\right)
\right)\left(1-r\left(t(1-q)-\omega\right)\right)}-D_{q,\omega}W(t).
\end{equation}
The quantum Euler--Lagrange equation is, by Theorem~\ref{thm:mr},
given by
\begin{equation}
\label{q:EL:RM}
E(-p,t)U'\left[C(t)\right]\frac{r\left(1+r\left(t-\frac{t-\omega}{q}\right)\right)
-r\left(1-\frac{1}{q}\right)}{\left(1+r\left(t-\frac{t-\omega}{q}\right)
\right)\left(1-r\left(t(1-q)-\omega\right)\right)}
+D_{q,\omega}\left[E(-p,t)U'\left[C(t)\right]\right]=0.
\end{equation}
Note that for $q\uparrow1$ and $\omega \downarrow 0$ problem
\eqref{q:RM}--\eqref{eq:const:qRM} reduces to \eqref{c:RM}, and
\eqref{q:EL:RM} to the classical Ramsey's Euler-Lagrange
differential equation.

% -------------------------------------------------------

\section{Conclusion}
\label{sec:conc}

In this paper we consider variational problems
in the context of the Hahn quantum calculus.
Such variational problems are defined through the Hahn
quantum difference operator and the Jackson--N\"orlund integral.
The origin of the Hahn quantum difference operator dates back to a 1949
paper of W. Hahn \cite{Hahn} where it was introduced to unify, in a
limiting sense, the Jackson $q$-difference derivative and the forward difference.
For both of these latter two quantum difference operators,
variational problems have been studied previously. The forward
difference problems were studied at least as early as 1937
by T. Fort \cite{Fort1937} and for the $q$-difference by G. Bangerezako in 2004
\cite{Bang04}. In both of these works the authors discuss necessary
conditions for optimality and obtain the analogue of the classical Euler--Lagrange
equation, as well as other classical results. The goal of the present paper is to
provide extensions of the previous results for the more general
Hahn quantum difference operator.

Another related course of study is that of the notion
of a time scale. The origins of this idea dates back to the late 1980's when
S.~Hilger introduced this notion in his Ph.D. thesis (directed by B.~Aulbach)
and showed how to unify continuous time and discrete time dynamical
systems \cite{Hilger}. Since this important result, the literature has exploded with papers
and books on time scales in which many known results for ordinary differential
equations and difference equations have been combined and extended \cite{mal:tor,B:04,Atici}.
The classical results of the calculus of variations have been extended to times scales
by M. Bohner in 2004 \cite{B:04}. However, the Euler-Lagrange equation here obtained
is not comparable with that of \cite{B:04}.
Indeed, the Hahn quantum calculus is not covered
by the Hilger time scale theory. This is well explained, for example,
in the 2009 Ph.D. thesis of Aldwoah \cite{Aldwoah} (see also \cite{Hamza}).
Here we just note the following: if in Bohner's paper \cite{B:04}
one chooses the time scale to be the $q$-scale
$\mathbb{T} := \{ q^n : n \in \mathbb{Z}\}$,
then the expression of the delta-derivative coincides with the expression
of the Jackson $q$-difference derivative. However,
they are not the same. There is an important distinction:
the Jackson $q$-difference derivative is defined in the set of real numbers
while the time-scale derivative is only defined in a subset $\mathbb{T}$
of the real numbers. One more difference, between the Hahn calculus we use
in this paper and the time scale theory, is the following: the delta integral
satisfies all the usual properties of the Riemann integral while
this is not the case with the Jackson--N\"orlund integral:
the inequality \eqref{surp:ineq}
is not always true for the Jackson--N\"orlund integral.

The main advantage of our results is that they are able
to deal with nondifferentiable functions,
even discontinuous functions, that are important in physical systems.
Quantum derivatives and integrals play a leading role in the understanding of complex
physical systems. For example, in 1992 Nottale introduced the theory of scale-relativity
without the hypothesis of space-time differentiability \cite{Nottale}.
A rigorous mathematical foundation to Nottale's scale-relativity theory
is nowadays given by means of a quantum calculus \cite{Almeida,Cresson}.
We remark that results in Bohner's paper \cite{B:04} are not able to deal
with such nondifferentiable functions. Variational problems in \cite{B:04}
are formulated for functions that are delta-differentiable.
It is well known that delta-differentiable functions are necessarily continuous.
This is not the case in our context: see Example~\ref{ex:disctF:withD},
where a discontinuous function is $q,\omega$-differentiable
in all the real interval $[-1,1]$.

We believe that the obtained results are of interest in Economics.
Economists model time as continuous or discrete. The kind of
``time'' (continuous or discrete) to be used in the
construction of dynamic models is a moot question. Although
individual economic decisions are generally made at discrete time
intervals, it is difficult to believe that they are perfectly
synchronized as postulated by discrete models. The usual assumption that
the economic activity takes place continuously, is a convenient
abstraction in many applications. In others, such as the ones studied in
financial-market equilibrium, the assumption of continuous trading
corresponds closely to reality. We believe that our Hahn's approach
helps to bridge the gap between two families of models: continuous
and discrete. We trust that the field here initiated will prove fruitful
for further research.

% -------------------------------------------------------

% ---------------------------------------------------

\end{document}